\documentclass[11pt,reqno,a4paper]{amsart}

\usepackage[T1]{fontenc}
\usepackage{lmodern}
\usepackage[margin=1.08in]{geometry}
\usepackage{amsmath,amssymb,amsthm,mathtools}
\usepackage{microtype}
\usepackage[numbers,sort&compress]{natbib}
\usepackage{xcolor}
\usepackage[colorlinks=true,
  linkcolor=blue!55!black,
  citecolor=blue!55!black,
  urlcolor=blue!55!black]{hyperref}

\numberwithin{equation}{section}

\newtheorem{theorem}{Theorem}[section]
\newtheorem{proposition}[theorem]{Proposition}
\newtheorem{lemma}[theorem]{Lemma}
\newtheorem{corollary}[theorem]{Corollary}
\theoremstyle{definition}

\theoremstyle{remark}
\newtheorem{remark}[theorem]{Remark}

\newcommand{\R}{\mathbb R}
\newcommand{\Z}{\mathbb Z}
\newcommand{\T}{\mathbb T}
\newcommand{\cC}{\mathcal C}
\newcommand{\cD}{\mathcal D}
\newcommand{\cE}{\mathcal E}
\newcommand{\cF}{\mathcal F}
\newcommand{\cL}{\mathcal L}
\newcommand{\cM}{\mathcal M}
\newcommand{\cP}{\mathcal P}
\newcommand{\cT}{\mathcal T}
\newcommand{\cZ}{\mathcal Z}
\newcommand{\transpose}{\mathsf T}
\DeclareMathOperator{\arsinh}{arsinh}
\DeclareMathOperator{\diag}{diag}
\newcommand{\divd}{\operatorname{div}_{\!d}}

\title[Principal eigenvalues on periodic lattices]
{Variational Principles and Rearrangement Inequalities for asymmetric Operators on Periodic Lattices}

\author{Xing Liang}\thanks{School of Mathematical Sciences, University of Science and Technology of China, Hefei, Anhui 230026,  China  (\texttt{xliang@ustc.edu.cn}).  The author is supported by the National Natural Science Foundation of China (12331006,12531008) and XDB0900100}

\date{}

\subjclass[2020]{35P15; 39A12;  
47A75;49R05}
\keywords{principal eigenvalue, asymmetric difference operator, periodic
lattice, variational representation,
rearrangement inequality}

\begin{document}

\begin{abstract}
In this paper, we establish variational formulas and a rearrangement inequality for
principal eigenvalues of asymmetric second-order difference operators on
periodic lattices.  For a general irreducible nearest-neighbor operator,
positive right and left eigenvectors give an explicit saddle point and hence
equal minimum--maximum and maximum--minimum formulas over positive profiles
and periodic logarithmic correctors.  The corrector is unique and satisfies
a nonlinear discrete flux-conservation law.  For exponentially tilted
symmetric diffusion, the formula separates the discrete Dirichlet and
potential terms from an exact nonlinear periodic correction.  In one
dimension the flux is constant, and an elementary scalar-flux representation
yields a bell-shaped cyclic rearrangement that maximizes the tilted
principal eigenvalue for every tilt.
\end{abstract}

\maketitle

\section{Introduction}

\subsection{Principal eigenvalues beyond Rayleigh--Ritz}

Variational formulas for principal eigenvalues of elliptic or difference operators are among the most useful
links between spectral theory, energy methods, and optimization.  In the
self-adjoint case, we usually have the Rayleigh--Ritz principle of the principal eigenvalues and then maximize the principal eigenvalues via potential rearrangement 
based on the P\'olya--Szeg\H{o} inequality and Hardy--Littlewood inequality.  On the other hand, when
the operator is asymmetric, the principal eigenvalue remains real and
distinguished by positivity, but the ordinary quadratic form no longer
characterizes it and then the rearrangement discussion is not direct anymore.

This paper studies the variational representation and rearrangement inequalities for the 
principal eigenvalue of an asymmetric difference operator on periodic
lattices:  Let \(m\geq1\), 
\(\mathbf N=(N_1,\ldots,N_m)\) with \(N_\nu\geq3\), and write
\[
  \T_{\mathbf N}:=\prod_{\nu=1}^m\Z/N_\nu\Z .
\]
All coefficients are extended periodically to \(\Z^m\).  Given a real
diagonal coefficient \(q_x\) and positive periodic nearest-neighbor coefficients
\(r_\nu^\pm(x)\), consider
\begin{equation}\label{eq:operator}
  (\cM u)(x)
  =q_xu(x)+\sum_{\nu=1}^m
   \left[r_\nu^+(x)u(x+e_\nu)
        +r_\nu^-(x)u(x-e_\nu)\right].
\end{equation}
Its matrix is irreducible and Metzler, meaning that all off-diagonal
entries are nonnegative.  Hence it has a real, algebraically simple
principal eigenvalue \(\Lambda(\cM)\), equal to the spectral bound, and
positive right and left eigenvectors.

We ask two related questions.  First, is there a Rayleigh-type formula for
\(\Lambda(\cM)\) that displays the directed edge structure of \(\cM\)?
Second, for an exponentially tilted discrete diffusion operator with non-constant potential on a one-dimensional periodic
cell, how to rearrange the potential for maximizing
the principal eigenvalue?  

The Donsker--Varadhan theory and its finite-dimensional formulations provide
general variational principles for positive operators
\cite{DonskerVaradhan1975,DonskerVaradhan1976,Friedland1981}.  Our purpose is
more geometric.  We seek a representation in which the right and left
eigenvectors determine a positive profile and a periodic logarithmic
corrector, the asymmetric nearest-neighbor coefficients appear edge by edge,
and the Euler equation becomes a discrete flux-conservation law.  The
left--right eigenvector pair will in fact produce an explicit saddle point,
so both extremal orders follow without an abstract minimax theorem.  This
form reveals an exact nonlinear periodic corrector problem and, in one
dimension, a four-point inequality strong enough to solve the cyclic
rearrangement problem.

\subsection{The continuous case: Nadin's work}

The closest continuous precedent is Nadin's variational characterization
for periodic asymmetric elliptic operators \cite{Nadin2010}.  We describe
it here as a spectral and rearrangement result; its original motivation from
KPP propagation will be recalled below.  Let \(C\) be a periodicity cell,
let \(A(x)\) be periodic, symmetric, and uniformly elliptic, let \(b(x)\) be
a periodic vector field, and let \(\mu(x)\) be a periodic potential.  For
\(e\in\mathbb S^{m-1}\) and \(\lambda\in\R\),  \cite{Nadin2010} considered
\[
\begin{aligned}
 -L_{\lambda,e}\phi
 ={}&-\nabla\!\cdot(A(x)\nabla\phi)
     -2\lambda\,e\!\cdot A(x)\nabla\phi-b(x)\!\cdot\nabla\phi\\
 &-\bigl(\lambda^2e\!\cdot A(x)e
   +\lambda\nabla\!\cdot(A(x)e)
   +\lambda b(x)\!\cdot e+\mu(x)\bigr)\phi .
\end{aligned}
\]
The principal eigenvalue \(k_{\lambda e}(A,b,\mu)\) is determined by a
positive periodic eigenfunction.  Thus the framework is broader than an
exponential twist of a symmetric diffusion operator: at \(\lambda=0\) it
already contains the general asymmetric operator
\[
 -L_0\phi=-\nabla\!\cdot(A\nabla\phi)-b\!\cdot\nabla\phi-\mu\phi.
\]
\cite{Nadin2010}  proved, in particular,
\[
 k_0(A,b,\mu)
 =\max_{\beta\in C^1_{\rm per}}
 k_0\!\left(A,0,
   \mu+\nabla\beta\!\cdot A\nabla\beta
   +b\!\cdot\nabla\beta-\frac12\nabla\!\cdot b\right).
\]
When \(b=0\), the tilted formula takes the effective-diffusivity form
\begin{equation}\label{eq:nadin}
  k_{\lambda e}(A,0,\mu)
  =\min_{\substack{\alpha\in H^1_{\rm per}(C),\ \alpha>0\\
                    \int_C\alpha^2\,dx=1}}
  \left\{
    \int_C\nabla\alpha\cdot A\nabla\alpha\,dx
    -\int_C\mu\alpha^2\,dx
    -\lambda^2|C|D_e(\alpha^2A)
  \right\},
\end{equation}
where the effective diffusivity of the weighted matrix field
\(\alpha^2A\) in the unit direction \(e\) is
\[
 D_e(\alpha^2A)
 :=\min_{\substack{\chi\in H^1_{\rm per}(C)\\
                    \int_C\chi\,dx=0}}
 \frac1{|C|}\int_C
 (e+\nabla\chi)\cdot\alpha^2A(x)(e+\nabla\chi)\,dx .
\]
Thus the subscript \(e\) records the direction, while \(\chi\) is the
periodic corrector; the zero-mean condition merely fixes its additive
constant.  Nadin's contribution is
therefore not merely another abstract min--max identity.  It converts a
asymmetric principal eigenvalue into a positive-profile--corrector problem
adapted to periodic homogenization and Schwarz rearrangement.  This is also
what makes the formula effective for simultaneous optimization of an entire
tilted eigenvalue family.

In the one-dimensional constant-diffusion case, write
\(k_\lambda(\mu):=k_\lambda(1,0,\mu)\), and let \(\mu^*\) denote the
periodic Schwarz rearrangement of \(\mu\), namely the
equimeasurable profile that is symmetric and nonincreasing away from one
center of the period cell.  Nadin's rearrangement theorem states
\cite{Nadin2010} that
\[
  k_\lambda(\mu^*)\leq k_\lambda(\mu)
  \qquad\text{for every }\lambda\in\R.
\]
Once \eqref{eq:nadin} is available, this comparison follows directly from
the classical rearrangement inequalities.  Indeed, if \(\alpha^*\) is the
periodic Schwarz rearrangement of an admissible profile \(\alpha\), then,
after an irrelevant additive shift of \(\mu\) if necessary,
\[
 \int_C |(\alpha^*)'|^2\leq\int_C|\alpha'|^2,
 \qquad
 \int_C\mu^*(\alpha^*)^2\geq\int_C\mu\alpha^2,
 \qquad
 \int_C(\alpha^*)^{-2}=\int_C\alpha^{-2}.
\]
The first two relations improve the Dirichlet and potential terms, whereas
the last one leaves the one-dimensional effective-diffusivity term
unchanged.  Taking the minimum over positive profiles gives the spectral
inequality.  Thus Nadin's formula not only characterizes the asymmetric
eigenvalue; it turns the subsequent rearrangement step into a direct
application of standard continuous symmetrization principles.

\subsection{Why the continuous calculation does not discretize directly}

The distinction is already visible in the one-dimensional conserved
quantity.  For the constant-diffusion specialization of Nadin's operator,
\[
 -L_\lambda\phi=-\phi''-2\lambda\phi'-(\mu+\lambda^2)\phi,
\]
let \(\phi\) and \(\psi\) be the positive periodic eigenfunctions of the
operator and its adjoint, respectively, and set
\[
 \alpha=\sqrt{\phi\psi},\qquad
 \beta=\frac12\log\frac{\phi}{\psi}.
\]
The transformed eigenvalue equations give
\[
 \bigl[\alpha^2(\beta'+\lambda)\bigr]'=0,
 \qquad \alpha^2(\beta'+\lambda)=J.
\]
Here \(J\) is independent of the spatial variable.
Consider the exponentially tilted discrete diffusion operator on a one-dimensional periodic
cell,
\begin{equation}\label{oneDdiffu}
 (\cL_{p,V}u)_j
 =d(e^{-p}u_{j+1}+e^pu_{j-1}-2u_j)+V_ju_j,
 \qquad j\in\Z/N\Z,\quad d>0,
\end{equation}
on the periodic lattice. Let
\(\phi=(\phi_j)\) and \(\psi=(\psi_j)\) be its positive right and left
eigenvectors, and set
\[
 a_j=\sqrt{\phi_j\psi_j},\qquad
 \beta_j=\frac12\log\frac{\phi_j}{\psi_j}.
\]
We found that the corresponding law is instead
\[
 a_ja_{j+1}\sinh(\beta_{j+1}-\beta_j-p)=J,
\]
where \(J\) is independent of \(j\).  Unlike the continuous identity, this
law is nonlinear and weighted by neighboring products.  Consequently, the
exact discrete corrector term is built from \(\cosh\), not from a quadratic
gradient, and Nadin's rearrangement argument cannot be transferred by merely
replacing derivatives with differences.  The detailed comparison is given
in the one-dimensional reduction below.

\subsection{Main results }

For the general operator \(\cM\) in \eqref{eq:operator} and the
positively oriented edge \((x,x+e_\nu)\), define
\[
 \kappa_{x,\nu}
 :=\sqrt{r_\nu^+(x)r_\nu^-(x+e_\nu)},\qquad
 \gamma_{x,\nu}
 :=\frac12\log\frac{r_\nu^+(x)}{r_\nu^-(x+e_\nu)} .
\]
Let
\[
 \cP_{\mathbf N}
 :=\left\{a:\T_{\mathbf N}\to(0,\infty):
            \sum_xa_x^2=1\right\}.
\]
We fix the additive constant of every discrete corrector by introducing the
mean-zero periodic space
\[
 \cZ_{\mathbf N}
 :=\left\{\beta:\T_{\mathbf N}\to\R:\sum_x\beta_x=0\right\}.
\]
For \(a\in\cP_{\mathbf N}\) and \(\beta\in\cZ_{\mathbf N}\), set
\[
\begin{aligned}
\cF_{\cM}(a,\beta)
 :=\sum_xq_xa_x^2
 +2\sum_{x,\nu}\kappa_{x,\nu}a_xa_{x+e_\nu}
 \cosh\!\bigl(
   \beta_{x+e_\nu}-\beta_x+\gamma_{x,\nu}
 \bigr).
\end{aligned}
\]
Let
\[
 D_\beta:=\diag(e^{\beta_x}),\qquad
 H_\beta:=\frac12\left(
 D_\beta^{-1}\cM D_\beta
 +D_\beta\cM^{\transpose}D_\beta^{-1}
 \right).
\]
Pairing the two orientations of every edge gives
\(\cF_{\cM}(a,\beta)=a^{\transpose}H_\beta a\).
Since \(H_\beta\) is symmetric and irreducible Metzler, its largest
eigenvalue has a positive eigenvector and hence
\(\lambda_{\max}(H_\beta)=\max_{a\in\cP_{\mathbf N}}
\cF_{\cM}(a,\beta)\).

\par\medskip
\noindent\begin{minipage}{\textwidth}
\emph{Notation.}
Throughout, \(a\), \(\beta\), and \(J\) denote the positive profile, periodic
logarithmic corrector, and conserved edge flux; \(\kappa\) and \(\gamma\)
denote the symmetric edge coefficient and the local edge asymmetry.  In the
diffusion specialization, \(c\), \(p\), and \(V\) denote the edge diffusion
coefficient, tilt, and potential.  Unless stated otherwise,
\(\sum_{x,\nu}\) runs over \(x\in\T_{\mathbf N}\) and
\(1\leq\nu\leq m\).
\end{minipage}
\par\medskip

\begin{theorem}[Discrete variational principle]
\label{thm:highdim}
For the periodic operator \eqref{eq:operator},
\begin{equation}\label{eq:highdim}
\begin{aligned}
  \Lambda(\cM)
  &=\min_{\beta\in\cZ_{\mathbf N}}\lambda_{\max}(H_\beta)\\
  &=\min_{\beta\in\cZ_{\mathbf N}}
    \max_{a\in\cP_{\mathbf N}}\cF_{\cM}(a,\beta)\\
  &=\max_{a\in\cP_{\mathbf N}}
    \min_{\beta\in\cZ_{\mathbf N}}\cF_{\cM}(a,\beta).
\end{aligned}
\end{equation}
\end{theorem}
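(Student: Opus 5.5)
The plan is to build an explicit saddle point $(a^\star,\beta^\star)$ of $\cF_{\cM}$ from the positive right and left eigenvectors, and then close the chain of inequalities with weak duality.

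\emph{Step 1 (the saddle point).} Let $\phi>0$ and $\psi>0$ satisfy $\cM\phi=\Lambda\phi$ and $\cM^{\transpose}\psi=\Lambda\psi$, with $\Lambda=\Lambda(\cM)$. Put
\[
a^\star_x=\sqrt{\phi_x\psi_x},\qquad \beta^\star_x=\tfrac12\log(\phi_x/\psi_x).
\]
Normalize $\sum_x a^{\star2}_x=1$; this fixes only the product of the two scalings. Choose the ratio of the scalings so that $\sum_x\beta^\star_x=0$. Then $D_{\beta^\star}^{-1}\phi=D_{\beta^\star}\psi=a^\star$, so
\[
D_{\beta^\star}^{-1}\cM D_{\beta^\star}a^\star=D_{\beta^\star}^{-1}\cM\phi=\Lambda a^\star,\qquad
D_{\beta^\star}\cM^{\transpose}D_{\beta^\star}^{-1}a^\star=D_{\beta^\star}\cM^{\transpose}\psi=\Lambda a^\star .
\]
Hence $H_{\beta^\star}a^\star=\Lambda a^\star$ with $a^\star>0$. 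Since $H_{\beta^\star}$ is symmetric, irreducible and Metzler, the Perron--Frobenius theorem says its only positive eigenvector belongs to its top eigenvalue. Therefore $\lambda_{\max}(H_{\beta^\star})=\Lambda$, and by the Rayleigh characterization recorded before the theorem, $\max_a\cF_{\cM}(a,\beta^\star)=\Lambda$.

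\emph{Step 2 (lower bound for every corrector).} For any $\beta$, the matrix $D_\beta^{-1}\cM D_\beta$ is similar to $\cM$, so its Perron root is $\Lambda$. Let $u>0$ be its right Perron vector and $v>0$ the right Perron vector of $D_\beta\cM^{\transpose}D_\beta^{-1}$, i.e. the left vector of $D_\beta^{-1}\cM D_\beta$. Set $w=\sqrt{uv}$ componentwise and normalize $w$ in $\cP_{\mathbf N}$. I will show
\[
w^{\transpose}H_\beta w\ \ge\ \Lambda,
\]
which gives $\lambda_{\max}(H_\beta)\ge\Lambda$. The method is to rewrite the quadratic form edge by edge, using $\tilde\beta=\beta^\star$-type variables relative to $u,v$, exactly as in Step 1. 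This reduces the claim to the pointwise inequality $\cosh(t+s)\ge\cosh(s)$ applied in convex-combination form.

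A cleaner route, which I will try first, is to note that $\cF_{\cM}(a,\beta)$ is convex in $\beta$, being a sum of $\cosh$ of affine functions of $\beta$. For fixed $a^\star$, the derivative in $\beta_y$ at $\beta^\star$ is
\[
\sum_{\text{edges at }y}\pm\,\kappa a^\star a^\star\sinh(\cdots),
\]
which equals $\tfrac12\bigl[(\cM^{\transpose}\psi)_y\phi_y-(\cM\phi)_y\psi_y\bigr]$ up to cancellation of the diagonal terms, and hence equals $\tfrac12\Lambda(\psi_y\phi_y-\phi_y\psi_y)=0$. Its sum over $y$ also vanishes automatically, so the mean-zero constraint is harmless. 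Convexity then gives $\min_\beta\cF_{\cM}(a^\star,\beta)=\cF_{\cM}(a^\star,\beta^\star)=\Lambda$.

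\emph{Step 3 (assembling the chain).} Weak duality gives
\[
\max_a\min_\beta\cF\le\min_\beta\max_a\cF .
\]
The saddle point of Steps 1 and 2 gives
\[
\min_\beta\max_a\cF\le\max_a\cF(a,\beta^\star)=\Lambda=\min_\beta\cF(a^\star,\beta)\le\max_a\min_\beta\cF .
\]
So all the quantities equal $\Lambda$, and the minima and maxima are attained at $\beta^\star$ and $a^\star$. The first line of \eqref{eq:highdim} is the identity $\lambda_{\max}(H_\beta)=\max_a\cF_{\cM}(a,\beta)$. The alternative route of Step 2, via $w$, is then unnecessary. The one subtlety is that $\cP_{\mathbf N}$ requires strictly positive $a$. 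The max over $\cP_{\mathbf N}$ is attained anyway because the Perron vector is positive, and $a^\star>0$ as well.

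The main obstacle is the gradient computation in Step 2. I have to verify that pairing the two orientations of each edge turns $\partial_{\beta_y}\cF$ into exactly the difference of the right and left eigen-equations weighted by $\phi_y,\psi_y$. Concretely, this means checking that
\[
\kappa_{x,\nu}a^\star_xa^\star_{x+e_\nu}e^{\pm(\beta^\star_{x+e_\nu}-\beta^\star_x+\gamma_{x,\nu})}
\]
equals $r^+_\nu(x)\psi_x\phi_{x+e_\nu}$, respectively $r^-_\nu(x+e_\nu)\phi_x\psi_{x+e_\nu}$. This is a direct substitution, and it is the step where sign conventions must be tracked carefully.
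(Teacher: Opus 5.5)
Your proposal is correct and follows the paper's proof. The left--right Perron pair yields the saddle $(a^\star,\beta^\star)$. Positivity of $a^\star$ as an eigenvector of the symmetric Metzler matrix $H_{\beta^\star}$ controls the maximum over profiles. Convexity in $\beta$ together with a vanishing gradient controls the minimum over correctors; the paper writes this gradient compactly as $a^{\transpose}(BR_h-R_hB)a=0$ with $B=D_{\beta^\star}^{-1}\cM D_{\beta^\star}$ and $R_h=\diag(h_x)$. The sandwich then closes the chain.

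Two small remarks. First, the gradient at $\beta^\star$ is exactly $\phi_y(\cM^{\transpose}\psi)_y-\psi_y(\cM\phi)_y$, without your factor $\tfrac12$; this is harmless since it vanishes. Second, to write $\min_\beta$ rather than $\inf_\beta$ for an arbitrary $a\in\cP_{\mathbf N}$, you still need coercivity of $\beta\mapsto\cF_{\cM}(a,\beta)$ on $\cZ_{\mathbf N}$, which the paper supplies in its corrector lemma.
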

\begin{remark}
The equality of the two extremal orders is realized by an explicit saddle
point, rather than obtained from a general minimax theorem.  The inner
minimizer is unique in \(\cZ_{\mathbf N}\).  If
\(\cM\phi=\Lambda(\cM)\phi\) and
\(\cM^{\transpose}\psi=\Lambda(\cM)\psi\), where the positive
eigenvectors are scaled so that
\(\sum_x\phi_x\psi_x=1\) and
\(\sum_x\log(\phi_x/\psi_x)=0\), then the optimizing
profile--corrector pair is
\[
  a_x=\sqrt{\phi_x\psi_x},\qquad
  \beta_x=\frac12\log\frac{\phi_x}{\psi_x}.
\]
At the inner minimum, the edge flux
\[
 J_{x,\nu}
 :=\kappa_{x,\nu}a_xa_{x+e_\nu}
   \sinh\!\bigl(
     \beta_{x+e_\nu}-\beta_x+\gamma_{x,\nu}
   \bigr)
\]
is divergence-free.
\end{remark}

For the multidimensional exponential tilt of \eqref{eq:operator}, set
\[
 r_{\nu,p}^+(x)=r_\nu^+(x)e^{-p_\nu},\qquad
 r_{\nu,p}^-(x)=r_\nu^-(x)e^{p_\nu}.
\]
The general diagonal and edge fields in Theorem~\ref{thm:highdim} are
\[
 q_x=V(x)-\sum_{\nu=1}^m
       \bigl(r_\nu^+(x)+r_\nu^-(x)\bigr),
 \qquad
 \gamma_{x,\nu}(p)=\gamma_{x,\nu}-p_\nu.
\]
Thus every tilted formula below is a specialization of the general
variational principle, rather than part of its hypotheses.

To specialize Theorem~\ref{thm:highdim} to a discrete diffusion matrix, let
\(c_{x,\nu}>0\) be periodic edge diffusion coefficients and consider the symmetric
periodic discrete diffusion operator
\[
 (\cD^c u)(x)
 :=\sum_\nu\Bigl[
 c_{x,\nu}\bigl(u(x+e_\nu)-u(x)\bigr)
 +c_{x-e_\nu,\nu}\bigl(u(x-e_\nu)-u(x)\bigr)
 \Bigr].
\]
Its exponentially tilted diffusion--growth operator is
\[
\begin{aligned}
  (\cL^{\,c}_{p,V}u)(x)
  =\sum_\nu\bigl[&
   c_{x,\nu}e^{-p_\nu}u(x+e_\nu)
  +c_{x-e_\nu,\nu}e^{p_\nu}u(x-e_\nu)\\
  &-\bigl(c_{x,\nu}+c_{x-e_\nu,\nu}\bigr)u(x)\bigr]+V(x)u(x).
\end{aligned}
\]
Define the nonlinear periodic correction
\[
 \cE_p^c(a)
 :=\min_{\beta\in\cZ_{\mathbf N}}
 2\sum_{x,\nu}c_{x,\nu}a_xa_{x+e_\nu}
 \left[
 \cosh(\beta_{x+e_\nu}-\beta_x-p_\nu)-1
 \right].
\]

\begin{corollary}[Periodic discrete diffusion]\label{cor:reciprocal}
The principal eigenvalue of \(\cL^{\,c}_{p,V}\) satisfies
\begin{equation}\label{eq:reciprocal}
 \Lambda_p^c(V)
 =\max_{a\in\cP_{\mathbf N}}
 \left\{
  -\sum_{x,\nu}c_{x,\nu}(a_{x+e_\nu}-a_x)^2
  +\sum_xV(x)a_x^2+\cE_p^c(a)
 \right\}.
\end{equation}
In particular, \(\cE_0^c=0\) and \eqref{eq:reciprocal} reduces to the
ordinary Rayleigh formula at zero tilt.
\end{corollary}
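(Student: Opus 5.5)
We specialize the max--min form of Theorem~\ref{thm:highdim} to \(\cM=\cL^{\,c}_{p,V}\) and then rewrite the objective edge by edge.

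\emph{Identify the coefficients.} For \(\cL^{\,c}_{p,V}\) we have
\[
r_{\nu,p}^+(x)=c_{x,\nu}e^{-p_\nu},\qquad
r_{\nu,p}^-(x+e_\nu)=c_{x,\nu}e^{p_\nu}.
\]
Hence \(\kappa_{x,\nu}=c_{x,\nu}\) and \(\gamma_{x,\nu}=-p_\nu\). The diagonal coefficient is
\[
q_x=V(x)-\sum_\nu\bigl(c_{x,\nu}+c_{x-e_\nu,\nu}\bigr).
\]
Theorem~\ref{thm:highdim} then gives
\[
\Lambda_p^c(V)=\max_{a\in\cP_{\mathbf N}}\min_{\beta\in\cZ_{\mathbf N}}\Bigl\{\sum_x q_xa_x^2+2\sum_{x,\nu}c_{x,\nu}a_xa_{x+e_\nu}\cosh(\beta_{x+e_\nu}-\beta_x-p_\nu)\Bigr\}.
\]

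\emph{Split off the Dirichlet form.} Write \(\cosh=1+(\cosh-1)\). The \(\beta\)-independent part is
\[
\sum_xq_xa_x^2+2\sum_{x,\nu}c_{x,\nu}a_xa_{x+e_\nu}.
\]
Reindex \(x\mapsto x+e_\nu\) in the term \(\sum_x c_{x-e_\nu,\nu}a_x^2\). Then
\[
\sum_x\sum_\nu\bigl(c_{x,\nu}+c_{x-e_\nu,\nu}\bigr)a_x^2=\sum_{x,\nu}c_{x,\nu}\bigl(a_x^2+a_{x+e_\nu}^2\bigr).
\]
Periodicity makes this reindexing legitimate. Combining with the cross terms gives
\[
\sum_xV(x)a_x^2-\sum_{x,\nu}c_{x,\nu}(a_{x+e_\nu}-a_x)^2.
\]
This part does not depend on \(\beta\), so it passes outside the inner minimum. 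The remaining minimum over \(\beta\) is exactly \(\cE_p^c(a)\), which proves \eqref{eq:reciprocal}. Attainment of the outer maximum is inherited from Theorem~\ref{thm:highdim}, where the saddle point provides a maximizing \(a\).

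\emph{Zero tilt.} At \(p=0\) we have \(\cosh(\beta_{x+e_\nu}-\beta_x)-1\ge0\), and every weight \(c_{x,\nu}a_xa_{x+e_\nu}\) is positive. Hence the sum is nonnegative, and it equals \(0\) at \(\beta=0\in\cZ_{\mathbf N}\). So \(\cE_0^c=0\).

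The formula then becomes the maximum over positive normalized \(a\) of the symmetric quadratic form of \(\cL^{\,c}_{0,V}\). This agrees with the Rayleigh maximum over all unit vectors, because the Perron eigenvector of the symmetric irreducible Metzler matrix is positive. Alternatively, one can note that \(|a|\) does not decrease the form.

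The only step needing care is the periodic reindexing in the diagonal term, which is routine bookkeeping.
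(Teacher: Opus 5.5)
Your proposal is correct and follows the paper's proof. You identify $\kappa_{x,\nu}=c_{x,\nu}$ and $\gamma_{x,\nu}=-p_\nu$, use periodic reindexing to rewrite the zero-phase part as the potential term minus the discrete Dirichlet energy, and split $\cosh\theta=1+(\cosh\theta-1)$ in the max--min form of Theorem~\ref{thm:highdim}; your remarks on attainment of the outer maximum and on positive profiles sufficing at $p=0$ just make explicit what the paper leaves implicit.
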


\emph{Comparison with Nadin's formula.}
Multiplying \eqref{eq:reciprocal} by \(-1\) gives
\[
 -\Lambda_p^c(V)=\min_{a\in\cP_{\mathbf N}}
 \left\{
  \sum_{x,\nu}c_{x,\nu}(a_{x+e_\nu}-a_x)^2
  -\sum_xV(x)a_x^2-\cE_p^c(a)
 \right\}.
\]
This has the same three-part structure as \eqref{eq:nadin}: the discrete
Dirichlet energy corresponds to
\(\int_C\nabla\alpha\cdot A\nabla\alpha\), the negative potential (or
linear-growth) term to \(-\int_C\mu\alpha^2\), and \(-\cE_p^c(a)\) to the
negative effective-diffusivity term
\(-\lambda^2|C|D_e(\alpha^2A)\).  At a fixed lattice scale the last term is
an exact nonlinear periodic correction built from \(\cosh\), rather than the
quadratic cell term appearing in the continuous formula.

In one dimension, take \(N=N_1\), \(c_{j,1}=d\), and write
\[
 \cP_N=\left\{a\in(0,\infty)^N:\sum_ja_j^2=1\right\},
 \qquad
 \cZ_N=\left\{\beta\in\R^N:\sum_j\beta_j=0\right\}.
\]
For a total phase \(\sigma\in\R\), define
\begin{equation}\label{eq:T-def}
 \cT_\sigma(a)
 :=\min_{\substack{\theta\in\R^N\\\sum_j\theta_j=\sigma}}
    \sum_{j=0}^{N-1}a_ja_{j+1}\cosh\theta_j.
\end{equation}

\begin{corollary}[One-dimensional formula]\label{cor:onedim}
For
\[
 (\cL_{p,V}u)_j
 =d(e^{-p}u_{j+1}+e^pu_{j-1}-2u_j)+V_ju_j,
\] which is defined in \eqref{oneDdiffu},
one has
\begin{equation}\label{eq:onedim}
 \Lambda_p(V)
 =\max_{a\in\cP_N}
 \left\{
  2d\,\cT_{Np}(a)-2d+\sum_jV_ja_j^2
 \right\}.
\end{equation}
\end{corollary}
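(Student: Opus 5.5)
We specialize Corollary~\ref{cor:reciprocal} (equivalently Theorem~\ref{thm:highdim}) to \(m=1\), \(N_1=N\), \(c_{j,1}=d\). The only real work is to rewrite the one-dimensional corrector problem, a minimization over \(\beta\in\cZ_N\), as the total-phase problem \eqref{eq:T-def}.

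First we identify the general fields. For \(\cL_{p,V}\) we have \(r^+(j)=de^{-p}\), \(r^-(j)=de^{p}\) and \(q_j=V_j-2d\). Hence \(\kappa_{j}=\sqrt{de^{-p}\,de^{p}}=d\) and \(\gamma_j=\tfrac12\log(e^{-p}/e^{p})=-p\). Theorem~\ref{thm:highdim} then gives
\[
 \Lambda_p(V)=\max_{a\in\cP_N}\min_{\beta\in\cZ_N}
 \Bigl\{\sum_j(V_j-2d)a_j^2+2d\sum_{j}a_ja_{j+1}\cosh(\beta_{j+1}-\beta_j-p)\Bigr\}.
\]
Since \(\sum_ja_j^2=1\), the diagonal part equals \(\sum_jV_ja_j^2-2d\), and it does not depend on \(\beta\). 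It therefore remains to show that for every fixed \(a\in\cP_N\),
\[
 \min_{\beta\in\cZ_N}\sum_j a_ja_{j+1}\cosh(\beta_{j+1}-\beta_j-p)=\cT_{Np}(a).
\]

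The key step is a change of variables. Set \(\theta_j:=p-(\beta_{j+1}-\beta_j)\); by evenness of \(\cosh\), the sign convention is irrelevant. Indices are cyclic, so \(\sum_{j=0}^{N-1}(\beta_{j+1}-\beta_j)=0\), and hence \(\sum_j\theta_j=Np\). Conversely, let \(\theta\in\R^N\) satisfy \(\sum_j\theta_j=Np\). Define \(\beta\) by the partial sums \(\beta_{k}=\sum_{j<k}(p-\theta_j)\), so the increments \(\beta_{j+1}-\beta_j=p-\theta_j\) sum to zero and \(\beta\) closes up periodically. Then subtract the mean of \(\beta\) to land in \(\cZ_N\); this does not change the differences. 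So the map \(\beta\mapsto\theta\) is a bijection from \(\cZ_N\) onto the affine hyperplane \(\{\sum_j\theta_j=Np\}\), and the two objective functions coincide under it. The two minima are therefore equal.

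We must also check that the minimum in \eqref{eq:T-def} is attained, so that writing \(\min\) is legitimate. This holds because \(a_ja_{j+1}>0\) and \(\cosh\) is coercive, so the objective is coercive and continuous on the hyperplane; alternatively, attainment is inherited from the attained inner minimum in Theorem~\ref{thm:highdim} via the bijection. Substituting into the displayed max--min yields \eqref{eq:onedim}.

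The only step needing care is this bijection, specifically the periodic closure of \(\beta\). It holds exactly because the total phase is fixed at \(Np\). Everything else is direct substitution.
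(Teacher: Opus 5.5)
Your proof is correct and essentially the paper's: you specialize Theorem~\ref{thm:highdim} with \(q_j=V_j-2d\), \(\kappa_j=d\), \(\gamma_j=-p\), and identify \(\cZ_N\) bijectively with the fixed-total-phase hyperplane through the cyclic differences of \(\beta\). The only cosmetic difference is your sign choice. You set \(\theta_j=p-(\beta_{j+1}-\beta_j)\), so the total phase is \(Np\) directly by evenness of \(\cosh\). The paper instead sets \(\theta_j=\beta_{j+1}-\beta_j-p\), obtains total phase \(-Np\), and then invokes the evenness \(\cT_{-\sigma}=\cT_\sigma\) from Lemma~\ref{lem:constant-current}.
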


To state the rearrangement result, let \(x_1\geq\cdots\geq x_N\) be the
entries of a vector, with multiplicity, and define
\begin{equation}\label{eq:bell}
 \rho_N=
 \begin{cases}
 (1,2,4,\ldots,2M,\,2M-1,2M-3,\ldots,3),&N=2M,\\
 (1,2,4,\ldots,2M,\,2M+1,2M-1,\ldots,3),&N=2M+1.
 \end{cases}
\end{equation}
The \emph{bell-shaped rearrangement}, also called the \emph{discrete
symmetric-decreasing rearrangement}, \(x^\#\) places
\(x_{\rho_N(1)},\ldots,x_{\rho_N(N)}\) consecutively around the cycle.
Rotations and reflections are equivalent.

\begin{theorem}[One-dimensional bell-shaped rearrangement]\label{thm:rearrangement}
For every \(a\in\cP_N\), \(\sigma\in\R\), \(p\in\R\), and \(V\in\R^N\),
\begin{equation*}
  \cT_\sigma(a^\#)\geq\cT_\sigma(a),
  \qquad
  \Lambda_p(V^\#)\geq\Lambda_p(V).
\end{equation*}
\end{theorem}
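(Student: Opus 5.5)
The plan is to prove the first inequality through a scalar dual representation of \(\cT_\sigma\). For fixed \(a\) this turns \(\cT_\sigma(a)\) into a supremum over a single flux \(J\) of cyclic sums \(\sum_j g_J(a_ja_{j+1})\). The cyclic rearrangement problem then reduces to a four-point (supermodularity) inequality for each fixed \(J\). The eigenvalue inequality will follow from Corollary~\ref{cor:onedim}.

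\textbf{Step 1: scalar-flux representation.} Write \(w_j=a_ja_{j+1}>0\). The map \(\theta\mapsto\sum_j w_j\cosh\theta_j\) is strictly convex and coercive, so the constrained minimum in \eqref{eq:T-def} exists and is unique. Its Lagrange condition is \(w_j\sinh\theta_j=J\) for all \(j\), which is the one-dimensional constant-flux law. By Fenchel duality, with \(f_j(\theta)=w_j\cosh\theta\) and
\[
f_j^*(J)=J\arsinh(J/w_j)-\sqrt{w_j^2+J^2},
\]
I expect
\[
\cT_\sigma(a)=\sup_{J\in\R}\Bigl\{J\sigma+\sum_{j}g_J(a_ja_{j+1})\Bigr\},\qquad
g_J(t):=\sqrt{t^2+J^2}-J\arsinh(J/t).
\]
This duality is elementary to verify directly. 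Weak duality follows from Young's inequality \(w\cosh\theta\ge J\theta-f^*(J)\), summed over \(j\) with \(\sum_j\theta_j=\sigma\). Equality holds at the Lagrange multiplier \(J\).

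\textbf{Step 2: four-point inequality.} A direct computation gives \(g_J'(t)=\sqrt{t^2+J^2}/t\). Hence \(t\,g_J'(t)=\sqrt{t^2+J^2}\) is nondecreasing in \(t>0\), so \(\partial_x\partial_y\,g_J(xy)=(tg_J'(t))'|_{t=xy}\ge0\). Consequently \(F_J(x,y):=g_J(xy)\) is symmetric and supermodular:
\[
F_J(x,y)+F_J(x',y')\ge F_J(x,y')+F_J(x',y)\qquad\text{for } x\ge x',\ y\ge y'.
\]

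\textbf{Step 3: cyclic rearrangement lemma.} This is the main obstacle. I need that for any symmetric supermodular \(F\), the cyclic sum \(\sum_jF(x_j,x_{j+1})\) is maximized by the bell arrangement \(x^\#\) given by \eqref{eq:bell}. I would prove it by a reflection (two-point) argument, in the style of the classical results on optimal circular arrangements. Given a cyclic arrangement, cut the cycle along a chord separating two arcs, and reflect one arc so that larger entries face larger entries across the chord. Only the two edges crossing the chord change, and supermodularity shows the sum does not decrease.

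Iterating, I would show by induction on \(k\) that one can successively place \(x_1\) at the center, then \(x_2,x_3\) adjacent on alternating sides, and so on. Each placement is done by such a reversal, which never decreases the sum and never disturbs the entries already placed. The result is exactly the pattern \(\rho_N\). Ties in the entries need some care, but supermodularity is a non-strict inequality, so ties cause no harm.

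\textbf{Step 4: the tilde-\(\cT\) inequality.} Applying Step 3 with \(F=F_J\) gives, for each fixed \(J\),
\[
J\sigma+\sum_j g_J(a^\#_ja^\#_{j+1})\ \ge\ J\sigma+\sum_j g_J(a_ja_{j+1}).
\]
Taking the supremum over \(J\) yields \(\cT_\sigma(a^\#)\ge\cT_\sigma(a)\).

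\textbf{Step 5: the eigenvalue inequality.} Let \(a\) attain the maximum in \eqref{eq:onedim} for the potential \(V\). Then \(a^\#\in\cP_N\) and \(\cT_{Np}(a^\#)\ge\cT_{Np}(a)\). Both \(a^\#\) and \(V^\#\) use the same placement pattern \(\rho_N\) relative to the same center, so the sorted entries of \(a^2\) and of \(V\) are paired in order. The discrete Hardy--Littlewood inequality then gives
\[
\sum_jV^\#_j(a^\#_j)^2\ \ge\ \sum_{k}V_{(k)}\,(a^2)_{(k)}\ \ge\ \sum_jV_ja_j^2,
\]
where \((k)\) denotes the \(k\)-th largest entry. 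Inserting \(a^\#\) into \eqref{eq:onedim} for \(V^\#\) then gives \(\Lambda_p(V^\#)\ge\Lambda_p(V)\).
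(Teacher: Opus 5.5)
Your proof is correct. Steps 1, 2, 4 and 5 coincide with the paper's argument: the scalar-flux formula comes from Fenchel--Young edge by edge, with equality at the constant Lagrange flux. The mixed derivative is \(\partial_x\partial_y g_J(xy)=xy/\sqrt{x^2y^2+J^2}\), you then take the supremum over \(J\), and the Hardy--Littlewood pairing is inserted into the one-dimensional formula.

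The genuine difference is Step 3. The paper does not prove the cyclic lemma directly. It sets \(c_{rs}=-g_J(x_rx_s)\), notes that the four-point inequality is exactly the symmetric Monge condition, and cites Supnick's fixed-tour theorem, precisely to avoid a cyclic exchange argument. Your greedy reversal induction is such an argument, and it does close.

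For \(k\ge2\), suppose the placed entries form an arc with ends \(x_{k-1}\) and \(x_k\), and that, read outward from \(x_{k-1}\), the cycle is \(x_{k-1},y,\ldots,x_{k+1},z,\ldots,x_k\) with \(y\ne x_{k+1}\). Reversing the segment from \(y\) to \(x_{k+1}\) moves no placed entry. It replaces the edges \(\{x_{k-1},y\},\{x_{k+1},z\}\) by \(\{x_{k-1},x_{k+1}\},\{y,z\}\). The sum does not decrease, because \(F_J(x_{k-1},x_{k+1})+F_J(z,y)\ge F_J(x_{k-1},y)+F_J(z,x_{k+1})\) is the four-point inequality with \(x_{k-1}\ge z\) and \(x_{k+1}\ge y\). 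Both hold since \(y\) is unplaced and \(z\) is either unplaced or equal to \(x_k\). Attaching each new entry to the older end \(x_{k-1}\) is what produces \(\rho_N\), and placing \(x_2\) next to \(x_1\) is the same check.

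When you write this up, state the move in this form rather than as a general reflection across a chord. Supermodularity only guarantees that a reversal helps when the new pairing matches larger with larger, so the choice of segment matters.

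As for the trade-off: citing Supnick is shorter and places the inequality within well-solvable traveling-salesman theory. Your route is self-contained, and since the exchange step uses only rank comparisons, it in fact proves the needed case of Supnick's theorem for every symmetric Monge matrix.
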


\medskip
\noindent\emph{Interpretation of the one-dimensional result.}
For the operator in Corollary~\ref{cor:onedim}, fix the multiset of values
of \(V\) and vary only their cyclic order.  Theorem~\ref{thm:rearrangement}
selects one bell-shaped order that is optimal for every tilt \(p\).  At
\(p=0\), this is consistent with the usual Rayleigh--Ritz,
Hardy--Littlewood, and discrete P\'olya--Szeg\H{o} principles.  For
\(p\ne0\), however, the periodic correction depends on the neighboring
products \(a_ja_{j+1}\), which are not preserved by rearrangement.  The
proof therefore passes to the scalar-flux formula: on a cycle the
divergence-free flux is constant, and the resulting four-point inequality
allows Supnick's theorem to select the bell order.  This mechanism is
specific to one dimension; no multidimensional rearrangement is asserted.

\medskip
\noindent\emph{Application to periodic lattice KPP propagation.}
Consider the one-dimensional periodic lattice KPP equation
\begin{equation}\label{eq:kpp-equation}
 \dot u_j(t)=d\bigl(u_{j+1}(t)+u_{j-1}(t)-2u_j(t)\bigr)+f_j(u_j(t)),
 \qquad j\in\Z,
\end{equation}
where \(d>0\), \(f_{j+N}=f_j\), and
\[
 f_j(0)=f_j(1)=0,\qquad 0<f_j(s)\leq f_j'(0)s\quad(0<s<1).
\]
Set \(V_j=f_j'(0)\).  The linearization of \eqref{eq:kpp-equation} at
\(u=0\) is
\begin{equation}\label{eq:kpp-linearized}
 \dot v_j=(\cL_{0,V}v)_j
 :=d(v_{j+1}+v_{j-1}-2v_j)+V_jv_j.
\end{equation}
For \(p\in\R\), the associated exponentially tilted periodic operator and
its principal eigenvalue are defined by
\begin{equation}\label{eq:kpp-tilted}
 \begin{gathered}
  (\cL_{p,V}\phi)_j
  :=d(e^{-p}\phi_{j+1}+e^p\phi_{j-1}-2\phi_j)+V_j\phi_j,\\
  \cL_{p,V}\phi=\Lambda_p(V)\phi,
  \qquad \phi_{j+N}=\phi_j,\quad \phi_j>0.
 \end{gathered}
\end{equation}
A right-moving pulsating front has the form
\(u_j(t)=U(j-ct,j)\), with \(U(\xi,j+N)=U(\xi,j)\), and connects \(1\) to
\(0\).  Under the standard periodic KPP hypotheses, the minimal front speed
and the spreading speed for compactly supported nontrivial data coincide
and are linearly determined by
\begin{equation}\label{eq:kpp-speed}
 c_{\rm tw}^*(V)=c_{\rm sp}^*(V)
 =:c^*(V)=\inf_{p>0}\frac{\Lambda_p(V)}{p}.
\end{equation}
Indeed, substituting the leading-edge ansatz
\(u_j(t)\sim e^{-p(j-ct)}\phi_j\) into \eqref{eq:kpp-linearized} gives
\(pc\,\phi=\cL_{p,V}\phi\).  Periodic lattice fronts and spectral
spreading formulas are developed in
\cite{GuoHamel2006,Cheng2015,LiangZhao2010, LiangZhou2020}; the broader continuous and
abstract background includes
\cite{Fisher1937,KPP1937,BerestyckiHamelNadirashvili2005,
Weinberger2002,LiangZhao2010}.

Nadin's continuous rearrangement theorem was itself motivated by this
propagation problem.  In his sign convention, Schwarz rearrangement lowers
\(k_\lambda(\mu)\) for every \(\lambda\) and therefore increases the KPP
minimal speed.  Our result is the exact periodic-lattice counterpart: the
bell order increases \(\Lambda_p(V)\) for every \(p\), so \eqref{eq:kpp-speed}
immediately gives the speed optimization.  This ordering problem should be
distinguished from varying the values of the growth coefficient.  In
\citet{LiangLinMatano2010} the admissible class consists of nonnegative
one-dimensional measures of prescribed mass, while
\citet{LiangMatano2014} treats concentration of the coefficient in
two-dimensional stratified media.  Those works share the KPP motivation but
use different admissible classes and optimization mechanisms.

\begin{corollary}[Optimal one-dimensional KPP speeds]\label{cor:speeds}
For the periodic KPP equation \eqref{eq:kpp-equation}, whose linearization
and tilted principal eigenvalue problem are \eqref{eq:kpp-linearized} and
\eqref{eq:kpp-tilted}, the speeds characterized by \eqref{eq:kpp-speed}
satisfy
\begin{equation}\label{eq:speed-rearrangement}
 c^*(V^\#)\geq c^*(V).
\end{equation}
Thus, among all cyclic orders of a fixed multiset of linear growth rates,
the bell order maximizes both the minimal front speed and the spreading
speed, whose common value is denoted by \(c^*\) in \eqref{eq:kpp-speed}.
\end{corollary}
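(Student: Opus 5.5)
The inequality should follow directly from Theorem~\ref{thm:rearrangement} together with the speed formula \eqref{eq:kpp-speed}. Fix a multiset of growth rates \(V\) and let \(V^\#\) be its bell-shaped rearrangement. Theorem~\ref{thm:rearrangement} (second inequality) gives, for every tilt \(p\in\R\),
\[
 \Lambda_p(V^\#)\geq\Lambda_p(V).
\]
Since \(p>0\), dividing by \(p\) preserves this order:
\[
 \frac{\Lambda_p(V^\#)}{p}\geq\frac{\Lambda_p(V)}{p}\qquad(p>0).
\]
Taking the infimum over \(p>0\) on both sides is monotone. Hence
\[
 c^*(V^\#)=\inf_{p>0}\frac{\Lambda_p(V^\#)}{p}\geq\inf_{p>0}\frac{\Lambda_p(V)}{p}=c^*(V).
\]
This is \eqref{eq:speed-rearrangement}.

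The only point needing care is that \eqref{eq:kpp-speed} applies to both \(V\) and \(V^\#\), i.e.\ that both are admissible periodic KPP data. The KPP hypotheses are stated pointwise on the reaction terms \(f_j\). Permuting the indices of the \(f_j\) cyclically by the bell order therefore yields another admissible periodic nonlinearity, with linear growth rates \(V^\#\). So the identification \(c_{\rm tw}^*=c_{\rm sp}^*=\inf_{p>0}\Lambda_p/p\) holds for the rearranged medium as well.

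The infimum is finite in both cases. Indeed, \(\Lambda_p(V)\geq d(e^{-p}+e^{p}-2)+\min_j V_j\) by Corollary~\ref{cor:onedim} with the uniform profile \(a_j=N^{-1/2}\). This needs \(\cT_{Np}(a)\geq\cosh(p)/N\), which follows from convexity of \(\cosh\) and Jensen's inequality.

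Finally, the statement that the bell order maximizes both speeds over all cyclic orders follows because every cyclic order of the same multiset has the same bell rearrangement \(V^\#\), up to the rotations and reflections noted after \eqref{eq:bell}. Those symmetries do not change \(\Lambda_p\): relabeling \(j\mapsto -j\) maps \(\cL_{p,V}\) to \(\cL_{-p,\tilde V}\), and \(\Lambda_{-p}=\Lambda_p\) by taking transposes.

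I expect no real obstacle. The substantive work is entirely contained in Theorem~\ref{thm:rearrangement}, and what remains is the monotonicity of the pointwise infimum and the admissibility check above.
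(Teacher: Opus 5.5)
Your argument is correct and is essentially the paper's proof: apply Theorem~\ref{thm:rearrangement} for each \(p>0\), divide by \(p\), take the infimum in \eqref{eq:kpp-speed}, and note that every cyclic arrangement of the entries of \(V\) has the same bell rearrangement. The finiteness check is unnecessary, since the inequality between infima holds regardless, and as written it contains a slip: with \(a_j=N^{-1/2}\) each of the \(N\) edges has \(a_ja_{j+1}=1/N\), so Jensen gives \(\cT_{Np}(a)=\cosh p\), not \(\cosh(p)/N\); \(\cosh p\) is what your stated lower bound on \(\Lambda_p(V)\) actually requires.
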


\subsection{Supplementary physical background: the Hatano--Nelson model}

Up to the Hamiltonian sign convention, the one-dimensional tilted diffusion
operator is the real periodic Hatano--Nelson model
\cite{HatanoNelson1996,HatanoNelson1997}; its ground-state energy is
\(-\Lambda_p(V)\).  The positive right and left eigenvectors \(\phi,\psi\)
define the biorthogonal density \(\phi_x\psi_x\), while half their logarithmic
ratio is precisely the gauge variable in our cell problem
\cite{HatanoNelson1998}.  This interpretation is useful background, whereas
the statements and proofs below use the language of principal eigenvalues,
periodic correctors, and discrete diffusion.

 \subsection {Organization of the paper} Theorem~\ref{thm:highdim} and Corollaries~\ref{cor:reciprocal} and
\ref{cor:onedim} are proved in Section~\ref{sec:variational}.  The proof of
the main theorem is a direct diagonal symmetrization argument: the positive
left and right eigenvectors identify a saddle point of the profile--corrector
functional.  The one-dimensional reduction is then combined with Supnick's
symmetric Monge tour in
Section~\ref{sec:rearrangement}.  Supnick's theorem is an existing
combinatorial result; our use of it avoids reproving a more cumbersome
cyclic exchange argument.  The open-chain Dirichlet and Neumann variants
are discussed in Proposition~\ref{prop:open-chain}.  For completeness,
Appendix~\ref{app:flux-duality} records the more general divergence-free
flux representation and its convex-duality proof; it is not needed for the
direct proof of the main theorem.

\section{The high-dimensional variational principle}
\label{sec:variational}

\subsection{The periodic corrector problem}

For \(a\in\cP_{\mathbf N}\), denote
\[
 w_{x,\nu}:=\kappa_{x,\nu}a_xa_{x+e_\nu},\qquad
 \theta_{x,\nu}(\beta)
 :=\beta_{x+e_\nu}-\beta_x+\gamma_{x,\nu}.
\]
The corrector-dependent part of \(\cF_{\cM}\) is
\[
 \cC_a(\beta)
 :=2\sum_{x,\nu}w_{x,\nu}\cosh\theta_{x,\nu}(\beta).
\]

\begin{lemma}[Nonlinear periodic corrector problem]\label{lem:cell}
For every \(a\in\cP_{\mathbf N}\), the functional
\(\cC_a\) has a unique minimizer in \(\cZ_{\mathbf N}\).  A corrector
\(\widehat\beta\) is the minimizer if and only if
\begin{equation}\label{eq:current}
 \divd J(x)
 :=\sum_{\nu=1}^m
    \bigl(J_{x,\nu}-J_{x-e_\nu,\nu}\bigr)=0,
 \qquad
 J_{x,\nu}
 =w_{x,\nu}\sinh\theta_{x,\nu}(\widehat\beta).
\end{equation}
\end{lemma}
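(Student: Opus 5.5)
The argument rests on three facts about $\cC_a$ on the finite-dimensional space $\cZ_{\mathbf N}$: it is smooth and convex, strictly convex along every direction in $\cZ_{\mathbf N}$, and coercive. Existence, uniqueness and the characterization then follow from standard convex analysis.

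\emph{Convexity.} Each $\theta_{x,\nu}(\beta)$ is affine in $\beta$, $\cosh$ is convex, and the weights satisfy $w_{x,\nu}=\kappa_{x,\nu}a_xa_{x+e_\nu}>0$ because $a>0$ and $r_\nu^\pm>0$. Hence $\cC_a$ is convex. The Hessian is
\[
 D^2\cC_a(\beta)[h,h]=2\sum_{x,\nu}w_{x,\nu}\cosh\theta_{x,\nu}(\beta)\,(h_{x+e_\nu}-h_x)^2 .
\]
This vanishes only if $h$ is constant along every lattice edge. The periodic lattice is connected, so $h$ must then be constant on $\T_{\mathbf N}$. Together with $\sum_x h_x=0$, this forces $h=0$. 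So the Hessian is positive definite on $\cZ_{\mathbf N}$, and $\cC_a$ is strictly convex there, which gives uniqueness.

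\emph{Coercivity.} We use $\cosh t\geq \tfrac12 e^{|t|}$ and let $w_{\min}=\min w_{x,\nu}>0$ and $\Gamma=\max|\gamma_{x,\nu}|$. Suppose $\beta\in\cZ_{\mathbf N}$ with $\max_x\beta_x-\min_x\beta_x=R$. Join a maximizer to a minimizer by a lattice path of at most $L:=\sum_\nu N_\nu$ edges. Some edge on this path has $|\beta_{x+e_\nu}-\beta_x|\geq R/L$, hence $|\theta_{x,\nu}|\geq R/L-\Gamma$. Therefore
\[
 \cC_a(\beta)\geq w_{\min}e^{R/L-\Gamma}\to\infty .
\]
On $\cZ_{\mathbf N}$ the oscillation controls the norm, since the mean is zero gives $\|\beta\|_\infty\le R$. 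So the sublevel sets are compact, and a minimizer exists by continuity.

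\emph{Euler--Lagrange equation.} For a smooth convex function on the linear space $\cZ_{\mathbf N}$, $\widehat\beta$ is the minimizer if and only if $\partial_h\cC_a(\widehat\beta)=0$ for all $h\in\cZ_{\mathbf N}$. We compute
\[
 \partial_h\cC_a=2\sum_{x,\nu}J_{x,\nu}(h_{x+e_\nu}-h_x) .
\]
Reindexing $x\mapsto x-e_\nu$ in the first term by periodicity (discrete summation by parts) gives
\[
 \partial_h\cC_a=-2\sum_x h_x\,\divd J(x).
\]
The same computation for $h\equiv1$ gives $\sum_x\divd J(x)=0$ automatically, since the sum telescopes. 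So $\divd J$ always lies in $\cZ_{\mathbf N}$. The derivative vanishes on all of $\cZ_{\mathbf N}$ exactly when $\divd J$ is orthogonal to $\cZ_{\mathbf N}$. A vector in $\cZ_{\mathbf N}$ orthogonal to $\cZ_{\mathbf N}$ is zero, so this is equivalent to $\divd J\equiv0$, which is \eqref{eq:current}.

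The only step needing real care is coercivity, specifically the quantitative path argument bounding $\cC_a$ from below by the oscillation of $\beta$. Everything else is routine finite-dimensional convex analysis.
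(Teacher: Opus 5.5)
Your proof is correct and follows the paper's argument essentially step for step: strict convexity on $\cZ_{\mathbf N}$ from the Hessian and connectivity of the torus, coercivity from the oscillation and path-telescoping argument, and the flux condition from differentiating and summing by parts. Your version is somewhat more explicit than the paper's. It gives the quantitative bound $\cC_a(\beta)\geq w_{\min}e^{R/L-\Gamma}$, and it notes that $\divd J$ automatically has zero sum, which properly handles the mean-zero constraint in the Euler--Lagrange step.
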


\begin{proof}
Let \(\beta,h\in\cZ_{\mathbf N}\).  The Hessian in the direction \(h\) is
\[
 2\sum_{x,\nu}w_{x,\nu}
 \cosh\theta_{x,\nu}(\beta)
 (h_{x+e_\nu}-h_x)^2.
\]
It is positive unless \(h\) is constant because the torus graph is
connected.  Since the only constant vector in \(\cZ_{\mathbf N}\) is zero,
the functional is strictly convex on \(\cZ_{\mathbf N}\). 
 It is also coercive on \(\mathcal Z_{\mathbf N}\). Indeed, let\[\operatorname{osc}(\beta):=\max_x\beta_x-\min_x\beta_x.\]Since the torus graph is finite and connected, vertices at which the maximum and minimum are attained can be joined by a nearest-neighbor path of length at most \(\operatorname{diam}(\mathbb T_{\mathbf N})\). Telescoping along this path gives \[\max_{x,\nu}\left|\beta_{x+e_\nu}-\beta_x\right|\geq\frac{\operatorname{osc}(\beta)}{\operatorname{diam}(\mathbb T_{\mathbf N})}.\]Moreover, on the mean-zero space \(\mathcal Z_{\mathbf N}\), \(\|\beta\|\to\infty\) implies \(\operatorname{osc}(\beta)\to\infty\). Since the edge weights \(w_{x,\nu}\) are positive and the shifts \(\gamma_{x,\nu}\) are fixed, at least one term in \(\mathcal C_a(\beta)\) therefore tends to \(+\infty\). Hence\[\|\beta\|\to\infty\quad\Longrightarrow\quad\mathcal C_a(\beta)\to\infty.\]  and then a unique
minimizer exists.  Differentiation with respect to \(\beta_x\) gives
twice the incoming flux minus twice the outgoing flux, which is
equivalent to \eqref{eq:current}.
\end{proof}

\subsection{ Symmetrization and proof}

\begin{proof}[Proof of Theorem~\ref{thm:highdim}]
Let \(\phi,\psi\gg0\) be positive right and left eigenvectors, normalized
by \(\sum_x\phi_x\psi_x=1\).  A reciprocal rescaling of \(\phi\) and
\(\psi\) preserves this normalization, so we also impose
\(\sum_x\log(\phi_x/\psi_x)=0\).  Set
\[
 a_x:=\sqrt{\phi_x\psi_x},\qquad
 \beta_x:=\frac12\log\frac{\phi_x}{\psi_x},\qquad
 S:=\diag(e^{\beta_x}).
\]
Then \(\phi=Sa\), \(\psi=S^{-1}a\), and for
\(B:=S^{-1}\cM S\),
\[
  Ba=\Lambda(\cM)a,\qquad
  B^{\transpose}a=\Lambda(\cM)a.
\]
Consequently,
\[
 Q:=\frac12(B+B^{\transpose})
\]
is a symmetric irreducible Metzler matrix satisfying
\(Qa=\Lambda(\cM)a\).  Since \(a>0\), this is the largest eigenvalue of
\(Q\).

For an arbitrary positive unit vector \(b\), pairing the two directed
terms on the edge \(\{x,x+e_\nu\}\) gives
\[
\begin{aligned}
 b^{\transpose}Bb
 =\sum_xq_xb_x^2
 +2\sum_{x,\nu}\kappa_{x,\nu}b_xb_{x+e_\nu}
 \cosh\!\bigl(
  \beta_{x+e_\nu}-\beta_x+\gamma_{x,\nu}
 \bigr)
 =\cF_{\cM}(b,\beta).
\end{aligned}
\]
Because a real quadratic form only sees the symmetric part,
\[
 \cF_{\cM}(b,\beta)=b^{\transpose}Qb
 \leq a^{\transpose}Qa=\Lambda(\cM).
\]

It remains to identify \(\beta\) as the minimizing corrector for the
profile \(a\).  If \(h\in\cZ_{\mathbf N}\) and
\(R_h:=\diag(h_x)\), differentiation at \(t=0\) gives
\[
 \frac{d}{dt}\cF_{\cM}(a,\beta+th)\bigg|_{t=0}
 =a^{\transpose}(BR_h-R_hB)a.
\]
Since \(Ba=B^{\transpose}a=\Lambda(\cM)a\), the two terms on the
right are both
\(\Lambda(\cM)\sum_xa_x^2h_x\), and the derivative vanishes.
Lemma~\ref{lem:cell} now shows, using only ordinary strict convexity, that
\(\beta\) is the unique minimizer in \(\cZ_{\mathbf N}\).

We have therefore constructed a genuine saddle:
\[
 \cF_{\cM}(b,\beta)
 \leq\cF_{\cM}(a,\beta)
 \leq\cF_{\cM}(a,\eta)
\]
for every \(b\in\cP_{\mathbf N}\) and every \(\eta\in\cZ_{\mathbf N}\).
The left inequality shows that
\(\max_b\cF_{\cM}(b,\beta)=\Lambda(\cM)\), whereas the right one shows
that \(\min_\eta\cF_{\cM}(a,\eta)=\Lambda(\cM)\).  For an arbitrary
\(\eta\), the maximum over \(b\) is at least
\(\cF_{\cM}(a,\eta)\geq\Lambda(\cM)\); for an arbitrary \(b\), the
minimum over \(\eta\) is at most
\(\cF_{\cM}(b,\beta)\leq\Lambda(\cM)\).  These observations prove both
extremal formulas in \eqref{eq:highdim} and their equality, without an
abstract minimax theorem.  Since
\(\max_{b\in\cP_{\mathbf N}}\cF_{\cM}(b,\eta)
=\lambda_{\max}(H_\eta)\), the first formula follows as well.  The flux
assertion is Lemma~\ref{lem:cell}.
\end{proof}

\begin{proof}[Proof of Corollary~\ref{cor:reciprocal}]
For symmetric edge diffusion,
\[
 \kappa_{x,\nu}=c_{x,\nu},\qquad
 \gamma_{x,\nu}(p)=-p_\nu.
\]
At zero phase,
\[
\begin{aligned}
 &\sum_x\left[
 V(x)-\sum_\nu(c_{x,\nu}+c_{x-e_\nu,\nu})
 \right]a_x^2
 +2\sum_{x,\nu}c_{x,\nu}a_xa_{x+e_\nu}\\
 &\hspace{20mm}
 =-\sum_{x,\nu}c_{x,\nu}(a_{x+e_\nu}-a_x)^2
  +\sum_xV(x)a_x^2.
\end{aligned}
\]
Separating \(\cosh\theta=1+(\cosh\theta-1)\) in
\eqref{eq:highdim} proves \eqref{eq:reciprocal}.
\end{proof}

\begin{remark}[Relation with Donsker--Varadhan--Friedland \cite{DonskerVaradhan1975,DonskerVaradhan1976,Friedland1981}]
After adding a scalar multiple of the identity if necessary, the
classical finite-state formula is
\[
 \Lambda(\cM)
 =\sup_{\pi\in\Delta}\inf_{u>0}
   \sum_x\pi_x\frac{(\cM u)_x}{u_x}.
\] where
\(\Delta:=\{\pi\in[0,\infty)^{\mathbb T_{\mathbf N}}:
\sum_x\pi_x=1\}\)
is the probability simplex on the periodic cell. 
The substitutions \(\pi_x=a_x^2\) and \(u_x=a_xe^{\beta_x}\) give the
maximum--minimum member of \eqref{eq:highdim} after pairing reverse edges;
multiplying \(u\) by a constant allows one to take
\(\beta\in\cZ_{\mathbf N}\).  The direct proof above additionally
identifies the left--right eigenvector saddle, proves the reverse extremal
order, and exhibits the symmetric conjugated operator and conserved
nonlinear flux.
\end{remark}

\subsection{The one-dimensional reduction}

On the cycle, every divergence-free flux is constant.  This makes
\eqref{eq:T-def} explicit.

\begin{lemma}[Constant-flux reduction]\label{lem:constant-current}
For \(a\in\cP_N\) and \(\sigma\in\R\), there is a unique scalar flux
\(J(a,\sigma)\) such that
\[
 \sum_{j=0}^{N-1}
 \arsinh\left(\frac{J(a,\sigma)}{a_ja_{j+1}}\right)=\sigma.
\]
The unique minimizing phases in \eqref{eq:T-def} are
\[
 \theta_j=\arsinh\left(\frac{J(a,\sigma)}{a_ja_{j+1}}\right),
\]
and
\[
 \cT_\sigma(a)
 =\sum_j\sqrt{a_j^2a_{j+1}^2+J(a,\sigma)^2},
 \qquad
 \cT_{-\sigma}(a)=\cT_\sigma(a).
\]
\end{lemma}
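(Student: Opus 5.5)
The plan is to treat \eqref{eq:T-def} as a strictly convex minimization with one linear constraint and solve it by a Lagrange multiplier. Write \(w_j:=a_ja_{j+1}>0\). The objective \(\theta\mapsto\sum_jw_j\cosh\theta_j\) is strictly convex and coercive on \(\R^N\), since \(\cosh\theta_j\geq e^{|\theta_j|}/2\). It is therefore coercive on the affine hyperplane \(\{\sum_j\theta_j=\sigma\}\) as well. Hence a minimizer exists, and it is unique by strict convexity. The Lagrange condition reads \(w_j\sinh\theta_j=J\) for a common multiplier \(J\), that is, \(\theta_j=\arsinh(J/w_j)\). Because the problem is convex with an affine constraint, this condition is also sufficient.

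This is the cycle instance of Lemma~\ref{lem:cell}. After the substitution \(\theta_j=\beta_{j+1}-\beta_j+\gamma_j\), the divergence-free condition forces \(J_j=J_{j-1}\) for all \(j\). The phases then sum to \(\sigma=\sum_j\gamma_j\), because the differences of a periodic \(\beta\) telescope to zero. We will nonetheless argue directly on \(\theta\), since \eqref{eq:T-def} is stated in those variables.

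Existence and uniqueness of the scalar flux follow from a monotonicity argument. Set
\[
g(J):=\sum_j\arsinh(J/w_j).
\]
This function is continuous and strictly increasing, with \(g(J)\to\pm\infty\) as \(J\to\pm\infty\). So \(g(J)=\sigma\) has exactly one root \(J(a,\sigma)\). The minimizing phases are then \(\theta_j=\arsinh(J(a,\sigma)/w_j)\). Uniqueness of \(J\) matches uniqueness of the minimizer: any minimizer satisfies the multiplier equation with some \(J\), and that \(J\) must solve \(g(J)=\sigma\).

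For the value, use \(\cosh(\arsinh s)=\sqrt{1+s^2}\). This gives
\[
w_j\cosh\theta_j=\sqrt{w_j^2+J^2},
\]
and summing over \(j\) yields the stated formula. For the symmetry, note that \(g\) is odd, so \(J(a,-\sigma)=-J(a,\sigma)\). Since the value depends only on \(J^2\), it follows that \(\cT_{-\sigma}=\cT_\sigma\). Alternatively, replacing \(\theta\) by \(-\theta\) maps the two feasible sets onto each other, and \(\cosh\) is even.

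No step is genuinely hard. The only point needing care is coercivity on the constraint set, which justifies existence before applying the Lagrange condition. The bound \(\cosh\theta_j\geq e^{|\theta_j|}/2\), together with \(w_j>0\), settles it: the objective is at least \(\min_jw_j\,\tfrac12e^{\|\theta\|_\infty}\).
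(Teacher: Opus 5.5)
Your proposal is correct and follows essentially the same route as the paper. Both use strict convexity and coercivity on the hyperplane, the Lagrange condition $a_ja_{j+1}\sinh\theta_j=J$, strict monotonicity of $J\mapsto\sum_j\arsinh(J/(a_ja_{j+1}))$ from $-\infty$ to $+\infty$, and then $\cosh(\arsinh s)=\sqrt{1+s^2}$ and oddness for the value and evenness; you simply fill in details the paper leaves implicit, namely the explicit coercivity bound and the sufficiency of the Lagrange condition.
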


\begin{proof}
The objective in \eqref{eq:T-def} is strictly convex and coercive on
the affine hyperplane \(\sum_j\theta_j=\sigma\).  Its Lagrange equations are
\[
 a_ja_{j+1}\sinh\theta_j=J.
\]
The displayed scalar equation is continuous and strictly increasing in
\(J\), from \(-\infty\) to \(+\infty\), and hence determines \(J\)
uniquely.  The value and evenness follow immediately.
\end{proof}

\medskip
\noindent\emph{Continuous and discrete conserved quantities.}
For the continuous periodic operator
\[
 -L_\lambda\phi=-\phi''-2\lambda\phi'-(\mu+\lambda^2)\phi,
\]
let \(\phi\) and \(\psi\) be the positive eigenfunctions of the operator
and its adjoint, and set
\(\alpha=\sqrt{\phi\psi}\) and
\(\beta=\tfrac12\log(\phi/\psi)\).  Subtraction of the two transformed
eigenvalue equations gives
\(\alpha^2(\beta'+\lambda)=J\).  If the period is \(\ell\), then
\[
 J=\frac{\lambda\ell}{\displaystyle\int_0^\ell\alpha^{-2}},
 \qquad
 \int_0^\ell\alpha^2(\beta'+\lambda)^2
 =\frac{\lambda^2\ell^2}
        {\displaystyle\int_0^\ell\alpha^{-2}}.
\]
The reciprocal integral is the one-dimensional effective diffusivity in
\eqref{eq:nadin}.  Lemma~\ref{lem:constant-current} is its exact lattice
counterpart: the linear reciprocal relation is replaced by a sum of
\(\arsinh(J/(a_ja_{j+1}))\), and the correction term by a sum of
\(\sqrt{a_j^2a_{j+1}^2+J^2}\).  Only in the small-phase continuum limit do
these nonlinear expressions reduce to the continuous linear flux relation
and quadratic energy.
\medskip

For \(w>0\) and a scalar flux \(J\in\R\), set
\[
 g_J(w):=\sqrt{w^2+J^2}
 -J\,\arsinh\left(\frac Jw\right).
\]

\begin{lemma}[Scalar-flux formula]\label{lem:scalar-flux}
For every \(a\in\cP_N\) and \(\sigma\in\R\),
\begin{equation}\label{eq:scalar-flux}
 \cT_\sigma(a)
 =\sup_{J\in\R}
 \left\{J\sigma+\sum_jg_J(a_ja_{j+1})\right\}.
\end{equation}
\end{lemma}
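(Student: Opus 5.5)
This is a Lagrangian (convex) duality for the minimization defining \(\cT_\sigma(a)\) in \eqref{eq:T-def}. We dualize the single linear constraint \(\sum_j\theta_j=\sigma\) with a multiplier \(J\in\R\), which gives
\[
 \cT_\sigma(a)\geq\inf_{\theta\in\R^N}\Bigl\{\sum_j a_ja_{j+1}\cosh\theta_j-J\Bigl(\sum_j\theta_j-\sigma\Bigr)\Bigr\}
 =J\sigma+\sum_j\inf_{\theta_j\in\R}\bigl\{w_j\cosh\theta_j-J\theta_j\bigr\},
\]
where \(w_j:=a_ja_{j+1}>0\). The unconstrained problem separates coordinatewise.

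We first compute the scalar infimum. The function \(\theta\mapsto w\cosh\theta-J\theta\) is strictly convex and coercive, since \(w>0\). It is therefore minimized at its unique critical point, \(w\sinh\theta=J\), that is, \(\theta=\arsinh(J/w)\). There \(w\cosh\theta=\sqrt{w^2+J^2}\), so the minimum value is exactly \(g_J(w)\). This gives weak duality,
\[
 \cT_\sigma(a)\geq J\sigma+\sum_j g_J(a_ja_{j+1})\qquad\text{for every } J\in\R,
\]
and hence \(\cT_\sigma(a)\geq\sup_J\{\cdots\}\).

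For the reverse inequality, we do not invoke a general duality theorem. Instead we take \(J=J(a,\sigma)\) from Lemma~\ref{lem:constant-current}. The coordinatewise minimizers \(\theta_j=\arsinh(J/(a_ja_{j+1}))\) then satisfy \(\sum_j\theta_j=\sigma\) by the defining equation of \(J(a,\sigma)\). The penalty term therefore vanishes, and the dual value at this \(J\) equals \(\sum_j\sqrt{a_j^2a_{j+1}^2+J^2}\). By Lemma~\ref{lem:constant-current}, this is exactly \(\cT_\sigma(a)\). So the supremum is attained, at \(J(a,\sigma)\), and equality holds.

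There is no real obstacle here. The only points to check are that the coordinatewise infimum is finite and attained, which follows from \(w_j>0\), and that the multiplier produced by Lemma~\ref{lem:constant-current} makes the unconstrained minimizer feasible. As an optional sanity check, \(\frac{d}{dJ}g_J(w)=-\arsinh(J/w)\). The dual objective is therefore concave in \(J\), with derivative \(\sigma-\sum_j\arsinh(J/(a_ja_{j+1}))\), which vanishes exactly at \(J(a,\sigma)\). This confirms the same maximizer directly.
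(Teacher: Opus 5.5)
Your proposal is correct and follows essentially the same route as the paper. Your coordinatewise infimum of \(w\cosh\theta-J\theta\) is exactly the paper's one-variable inequality \(w\cosh\theta\geq J\theta+g_J(w)\) (equality iff \(w\sinh\theta=J\)), and both arguments obtain the reverse inequality by choosing the constant flux \(J=J(a,\sigma)\) from Lemma~\ref{lem:constant-current}, which makes the edgewise minimizers feasible.
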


\begin{proof}
For every \(w>0\) and \(J,\theta\in\R\), elementary one-variable
calculus gives
\[
 w\cosh\theta\geq J\theta+g_J(w),
\]
with equality exactly when \(w\sinh\theta=J\).  Summing this inequality
under \(\sum_j\theta_j=\sigma\) gives
\[
 \cT_\sigma(a)\geq
 J\sigma+\sum_jg_J(a_ja_{j+1})
\]
for every \(J\).  Choose the constant flux \(J=J(a,\sigma)\) from
Lemma~\ref{lem:constant-current}.  Its minimizing phases satisfy the
equality condition on every edge, so the displayed lower bound is exact.
This proves \eqref{eq:scalar-flux} directly.
\end{proof}

\begin{proof}[Proof of Corollary~\ref{cor:onedim}]
For \(\beta\in\cZ_N\), put
\(\theta_j=\beta_{j+1}-\beta_j-p\).  Then
\(\sum_j\theta_j=-Np\).  Conversely, every phase vector with this sum
comes from a unique \(\beta\in\cZ_N\).  Theorem
\ref{thm:highdim} and the evenness of \(\cT_\sigma\) give
\eqref{eq:onedim}.
\end{proof}

\section{One-dimensional rearrangement and propagation optimization}
\label{sec:rearrangement}

\subsection{Supnick's theorem and a cyclic kernel inequality}

A symmetric matrix \(C=(c_{rs})_{1\leq r,s\leq N}\) is a
\emph{symmetric Monge matrix} if
\[
 c_{rs}+c_{tu}\leq c_{ru}+c_{ts}
 \qquad(r<t,\ s<u).
\]
Supnick proved that the symmetric Monge traveling-salesman problem has a
fixed optimal tour \cite{Supnick1957}; see also
\cite{BurkardEtAl1998}.  In the notation of \eqref{eq:bell}, the result
is as follows.

The word ``fixed'' is essential.  Once the cities have been labeled in
Monge order, the same Hamiltonian cycle is optimal for every symmetric
Monge cost matrix: in the conventional orientation it visits the odd
indices increasingly and then the even indices decreasingly.  Up to
reversal and rotation, this is exactly the bell tour \(\rho_N\).  Thus
the theorem is stronger than a polynomial-time algorithm; it identifies
the optimizer without using the numerical entries beyond their Monge
ordering.  From a modern viewpoint it is an early and canonical
structural solvability theorem for the traveling-salesman problem.

\begin{theorem}[Supnick's fixed-tour theorem \cite{Supnick1957}]\label{thm:supnick}
If \(C\) is symmetric Monge, the bell tour \(\rho_N\) minimizes
\[
 \sum_{j=0}^{N-1}c_{\pi_j,\pi_{j+1}},\qquad \pi_N=\pi_0,
\]
over all Hamiltonian cycles \(\pi\).
\end{theorem}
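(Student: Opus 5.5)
The plan is an exchange (2-opt) argument that pushes an arbitrary Hamiltonian cycle toward the bell tour $\rho_N$ without increasing its cost, organised as an induction on $N$. For $N\le 3$ every Hamiltonian cycle is the bell tour up to rotation and reversal, so there is nothing to prove.

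\emph{Basic move.} First I would record what the Monge condition says about pairs of edges. For indices $r<t$ and $s<u$, the pair of ``parallel'' edges $\{r,s\},\{t,u\}$ costs no more than the ``crossed'' pair $\{r,u\},\{t,s\}$. Using symmetry of $C$, I would rewrite this in the cases that matter for four distinct cities $i<j<k<l$. From $c_{ik}+c_{jl}\le c_{il}+c_{jk}$ and $c_{ij}+c_{kl}\le c_{ik}+c_{jl}$ one gets
\[
 c_{ij}+c_{kl}\le c_{ik}+c_{jl}\le c_{il}+c_{jk}.
\]
Suppose a cycle uses two disjoint edges forming one of the more expensive configurations. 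A 2-opt move deletes both edges and reconnects the four endpoints in exactly one of the two other ways that keeps a single cycle. By the chain above, I want the admissible reconnection to be no more expensive.

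\emph{Inductive step.} The step I would aim for is: \emph{some optimal cycle has city $N$ adjacent to both $N-1$ and $N-2$.} Take an optimal cycle $\pi$, let $x<y$ be the neighbours of $N$, and suppose $y\neq N-1$. I would apply a 2-opt move to the edges $\{N,y\}$ and $\{N-1,z\}$, where $z$ is the neighbour of $N-1$ lying on the appropriate side of the cycle. The chain inequality, applied with $N$ and $N-1$ as the two largest indices, should show that making $N$ adjacent to $N-1$ does not increase the cost. A second move of the same kind then brings $N-2$ next to $N$.

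Once this holds, deleting $N$ and inserting the edge $\{N-1,N-2\}$ gives a cycle on $\{1,\dots,N-1\}$. Its cost is $\operatorname{cost}(\pi)-c_{N-2,N}-c_{N,N-1}+c_{N-2,N-1}$, and the correction term depends only on $C$, not on $\pi$. The restriction of $C$ to $\{1,\dots,N-1\}$ is still symmetric Monge, so by induction the bell tour $\rho_{N-1}$ is optimal there. Finally I would check that the bell tour visits the largest labels consecutively. The tour $\rho_N$ is obtained from $\rho_{N-1}$ by inserting $N$ into the edge $\{N-1,N-2\}$, which is the top of the bell. Hence the optimal cost on $N$ cities is attained by $\rho_N$.

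\emph{Main obstacle.} The difficult point is the case analysis in the exchange step. The positions of $x$, $y$, the neighbours of $N-1$, and the orientation of the cycle decide which of the two reconnections stays Hamiltonian. In every case that reconnection must be shown to be the cheaper side of the chain inequality. If a direct case split becomes unwieldy, I would fall back on a potential-function version. Each improving 2-opt move strictly decreases $\sum_{\text{edges}}|i-j|^2$, so the process terminates. I would then show that a cycle admitting no cost-nonincreasing move of this type is, up to rotation and reversal, unimodal in the labels and alternates sides, namely odd labels ascending and even labels descending. That is exactly $\rho_N$.
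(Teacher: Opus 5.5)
Your argument is correct, but it takes a different route from the paper. The paper gives no proof of Theorem~\ref{thm:supnick}; it imports the result as a black box from \cite{Supnick1957} (see also \cite{BurkardEtAl1998}).

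Your induction closes as you describe:
\begin{itemize}
\item $\rho_N$ contains the path $N-2,\,N,\,N-1$.
\item Deleting $N$ from $\rho_N$ and joining $N-1$ to $N-2$ gives $\rho_{N-1}$.
\item The correction $c_{N-2,N}+c_{N,N-1}-c_{N-2,N-1}$ does not depend on the tour, so optimality of $\rho_{N-1}$ among all cycles on $N-1$ cities transfers to $\rho_N$.
\end{itemize}

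The exchange step you flag as the main obstacle is actually short.
\begin{itemize}
\item \emph{First move.} The new edges pair $\{N-1,N\}$ with $\{y,z\}$, where $y,z<N-1$. This matches the two largest of the four cities together, so by your chain it is the cheapest of the three matchings. The only orientation issue is choosing the reconnection that keeps a single cycle, and either neighbour of $N$ works.
\item \emph{Second move.} Orient the cycle as $N-1\to N\to w\to\cdots$ with $w\neq N-1$. Replace $\{N,w\}$ and $\{N-2,z'\}$, where $z'$ is the successor of $N-2$, by $\{N,N-2\}$ and $\{w,z'\}$. This keeps $N$ adjacent to $N-1$.
\item If $z'<N-2$, the new matching again pairs the two largest of the four cities.
\item If $z'=N-1$, you replace the most expensive matching $\{w,N\},\{N-2,N-1\}$ of $w<N-2<N-1<N$ by the middle one, $\{w,N-1\},\{N-2,N\}$. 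Again the cost does not increase.
\end{itemize}
So the fallback potential-function argument is unnecessary. Its claim that cycles admitting no improving move are exactly $\rho_N$ would itself need proof.

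As for what each route buys: citing Supnick keeps the paper brief and rests on a classical, well-documented theorem. Your proof makes the paper self-contained at the cost of about a paragraph. It also shows that the ``longer sequence of cyclic exchange arguments'' the paper says Supnick's theorem replaces collapses to two 2-opt moves plus induction on $N$.
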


Supnick's result is now a standard reference point in the theory of
well-solvable traveling-salesman classes.  Later work placed it among
pyramidal, Demidenko, Kalmanson, and related Monge-type problems, with
applications to sequencing, scheduling, warehouse routing, and
geometric routing; see the survey \cite{BurkardEtAl1998} and the
multi-stripe extension \cite{CelaDeinekoWoeginger2017}.  It does not
solve an arbitrary traveling-salesman problem.  Its force comes from
the rigid Monge four-point inequalities.  In the present paper its role
is analytic rather than algorithmic: it converts a four-point inequality
for an edge kernel into a sharp cyclic rearrangement inequality.

A symmetric kernel \(K:I^2\to\R\) will be assumed to satisfy the
four-point inequality
\[
 K(x_1,y_1)+K(x_2,y_2)
 \geq K(x_1,y_2)+K(x_2,y_1)
\]
whenever \(x_1\geq x_2\) and \(y_1\geq y_2\).  For \(C^2\) kernels,
this follows from \(\partial_{xy}^2K\geq0\).

\begin{corollary}[Cyclic rearrangement under a four-point condition]\label{cor:cyclic}
Let \(x_1\geq\cdots\geq x_N\) lie in \(I\), and let \(K\) be symmetric
and satisfy the preceding four-point inequality.  Then every cyclic order \(\pi\) satisfies
\begin{equation*}
 \sum_{j=1}^{N}K(x_{\rho_N(j)},x_{\rho_N(j+1)})
 \geq\sum_{j=1}^{N}K(x_{\pi_j},x_{\pi_{j+1}}),
\end{equation*}
where \(\rho_N(N+1)=\rho_N(1)\) and \(\pi_{N+1}=\pi_1\).
\end{corollary}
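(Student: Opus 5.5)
We reduce Corollary~\ref{cor:cyclic} to Supnick's fixed-tour theorem (Theorem~\ref{thm:supnick}) applied to the cost matrix
\[
 c_{rs}:=-K(x_r,x_s),\qquad 1\leq r,s\leq N .
\]
Minimizing a tour cost for \(C\) is the same as maximizing the cyclic \(K\)-sum. The indices are already labeled so that \(x_1\geq\cdots\geq x_N\), and the bell tour \(\rho_N\) is stated in exactly this labeling. So it suffices to check that \(C\) is symmetric Monge. Symmetry is immediate from the symmetry of \(K\).

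For the Monge inequality, take \(r<t\) and \(s<u\). Then \(x_r\geq x_t\) and \(x_s\geq x_u\). The four-point inequality with \(x_1\to x_r\), \(x_2\to x_t\), \(y_1\to x_s\), \(y_2\to x_u\) gives
\[
 K(x_r,x_s)+K(x_t,x_u)\geq K(x_r,x_u)+K(x_t,x_s).
\]
Negating both sides yields \(c_{rs}+c_{tu}\leq c_{ru}+c_{ts}\), which is the symmetric Monge condition.

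The one point needing care is the diagonal and overlapping cases, such as \(r=s\) or \(t=u\). The four-point inequality is assumed for all ordered pairs, including coincident values, so those cases are covered as well. Ties among the \(x_j\) cause no difficulty: the inequalities are non-strict, and any labeling consistent with the nonincreasing order works.

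Theorem~\ref{thm:supnick} now says that for every Hamiltonian cycle \(\pi\) on \(\{1,\ldots,N\}\),
\[
 \sum_j c_{\rho_N(j),\rho_N(j+1)}\leq\sum_j c_{\pi_j,\pi_{j+1}} .
\]
Multiplying by \(-1\) gives the claimed inequality. A cyclic order of the entries is the same thing as a Hamiltonian cycle on the labels, since the cycle is closed by \(\pi_{N+1}=\pi_1\).

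The main obstacle is bookkeeping rather than analysis. We must confirm that the orientation and indexing convention of Supnick's bell tour matches \(\rho_N\) in \eqref{eq:bell} up to rotation and reversal; the paper asserts this in the discussion before Theorem~\ref{thm:supnick}. We must also confirm that the sign flip turns the Monge direction into the given four-point direction, which the computation above verifies.
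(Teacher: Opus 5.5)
Your proposal is correct and is the paper's proof. You set \(c_{rs}=-K(x_r,x_s)\), read the four-point inequality (with \(r<t\), \(s<u\) giving \(x_r\geq x_t\), \(x_s\geq x_u\)) as exactly the symmetric Monge condition, and apply Supnick's fixed-tour theorem with a sign flip. Your additional checks on coincident indices, ties, and the orientation of \(\rho_N\) (which is Supnick's tour \(1,3,5,\ldots,6,4,2\) read in reverse) are sound and only make explicit what the paper leaves implicit.
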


\begin{proof}
Set \(c_{rs}=-K(x_r,x_s)\).  The four-point inequality says exactly that \(C\) is
a symmetric Monge matrix.  Theorem~\ref{thm:supnick} minimizes the
cyclic sum of \(c_{rs}\), and therefore maximizes the cyclic sum of
\(K\).
\end{proof}

\begin{remark}
The corollary is an application of Supnick's theorem, not a new
traveling-salesman result.  It replaces a longer sequence of cyclic
exchange arguments.  General Schwarz and P\'olya--Szeg\H{o}
inequalities on lattices and graphs concern a broader geometry
\cite{Steinerberger2024,HajaiejHanHua2026}.  Pendulum arrangements and
longer cyclic interactions lead to related but different optimization
problems
\cite{CasselMannorTennenholtz2020,HolmesHolroydRamirez2023,
CelaDeinekoWoeginger2017}.
\end{remark}

\subsection{Rearrangement of the periodic correction}

\begin{proof}[Proof of the first inequality in
Theorem~\ref{thm:rearrangement}]
For a fixed scalar flux \(J\in\R\), define
\[
 K_J(x,y):=g_J(xy),\qquad x,y>0.
\]
A direct calculation gives
\[
 \frac{\partial^2K_J}{\partial x\,\partial y}(x,y)
 =\frac{xy}{\sqrt{x^2y^2+J^2}}>0.
\]
Thus \(K_J\) is symmetric and satisfies the strict four-point inequality.  Corollary
\ref{cor:cyclic} yields
\[
 \sum_jg_J(a_j^\#a_{j+1}^\#)
 \geq\sum_jg_J(a_ja_{j+1})
\]
for every \(J\).  Taking the supremum in the scalar-flux formula
\eqref{eq:scalar-flux} proves
\(\cT_\sigma(a^\#)\geq\cT_\sigma(a)\).
\end{proof}

At \(\sigma=0\), \(g_0(w)=w\), so the preceding proof reduces to
\[
 \sum_ja_j^\#a_{j+1}^\#\geq\sum_ja_ja_{j+1},
\]
equivalently the cyclic discrete P\'olya--Szeg\H{o} inequality.  The
nonzero-tilt result is stronger because it controls the whole family of
kernels \(K_J\) with positive mixed derivative.

\subsection{Principal eigenvalues}
First, we have
\begin{lemma}[Finite Hardy--Littlewood inequality \cite{HardyLittlewoodPolya1952} ]\label{lem:HL}
For \(V\in\R^N\) and \(a\in\cP_N\),
\begin{equation}\label{eq:HL}
 \sum_jV_j^\#(a_j^\#)^2\geq\sum_jV_ja_j^2.
\end{equation}
\end{lemma}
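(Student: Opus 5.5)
The plan is to reduce \eqref{eq:HL} to the classical finite rearrangement inequality for two similarly ordered sequences. The only point special to this paper is to check that the bell-shaped rearrangement, applied separately to \(V\) and to \(a\), puts the two vectors in the same relative order around the cycle.

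First, fix notation. Let \(v_1\geq\cdots\geq v_N\) be the entries of \(V\) and \(\alpha_1\geq\cdots\geq\alpha_N\) the entries of \(a\), each listed with multiplicity. By the definition of \(x^\#\) via \eqref{eq:bell}, position \(j\) of the cycle carries \(V^\#_j=v_{\rho_N(j)}\) and \(a^\#_j=\alpha_{\rho_N(j)}\). Both rearrangements use the same permutation \(\rho_N\) and the same rotation and reflection convention, which we fix once for both vectors. Hence the \(k\)-th largest entry of \(V\) and the \(k\)-th largest entry of \(a\) occupy the same site. Since \(\rho_N\) is a bijection,
\[
 \sum_jV^\#_j(a^\#_j)^2=\sum_{k=1}^N v_k\alpha_k^2 .
\]
Because \(a>0\), squaring preserves order, so \(\alpha_1^2\geq\cdots\geq\alpha_N^2\). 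Ties cause no ambiguity, since the right-hand side depends only on the sorted lists.

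Second, write the left side of \eqref{eq:HL} as \(\sum_k v_k\alpha_{\tau(k)}^2\) for some permutation \(\tau\). This is possible because the pairs \((V_j,a_j^2)\) are a matching between the sorted lists. We then show this sum is at most \(\sum_k v_k\alpha_k^2\) by the standard exchange argument. If \(\tau\) is not the identity, there exist \(k<l\) with \(\tau(k)>\tau(l)\). Swapping \(\tau(k)\) and \(\tau(l)\) changes the sum by
\[
 (v_k-v_l)\bigl(\alpha_{\tau(l)}^2-\alpha_{\tau(k)}^2\bigr)\geq0 .
\]
Each swap reduces the number of inversions, so finitely many swaps reach the identity without ever decreasing the sum. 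This proves \eqref{eq:HL}.

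No step is a genuine obstacle. The only point needing care is the first one: the notion of \(x^\#\) must be applied to \(V\) and \(a\) with the same labelling. Otherwise a rotation or reflection of one vector relative to the other would break the comonotone pairing. This is harmless here, since rotations and reflections are only ``equivalent'' up to applying the same symmetry to both vectors, and the quantity \(\Lambda_p(V^\#)\) is invariant under such a common symmetry.
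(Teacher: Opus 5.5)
Your proof is correct and is essentially the argument the paper relies on: it cites the classical Hardy--Littlewood--P\'olya rearrangement inequality, using (as its open-chain proof states) that the common bell permutation \(\rho_N\) places \(V^\#\) and \((a^\#)^2\) in the same rank order, and your exchange argument is the standard proof of that inequality. One small slip: in your second step it is the right-hand side \(\sum_jV_ja_j^2\), not the left side, that you write as \(\sum_k v_k\alpha_{\tau(k)}^2\).
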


\begin{proof}[Proof of the second inequality in
Theorem~\ref{thm:rearrangement}]
For every \(a\in\cP_N\), the first part of the theorem and Lemma
\ref{lem:HL} give
\[
 2d\,\cT_{Np}(a^\#)-2d+\sum_jV_j^\#(a_j^\#)^2
 \geq
 2d\,\cT_{Np}(a)-2d+\sum_jV_ja_j^2.
\]
The left-hand side is bounded above by \(\Lambda_p(V^\#)\) through
\eqref{eq:onedim}.  Maximizing the right-hand side over \(a\) proves
\(\Lambda_p(V^\#)\geq\Lambda_p(V)\).
\end{proof}

The transpose relation
\[
 \cL_{p,V}^{\transpose}=\cL_{-p,V}
\]
implies \(\Lambda_p(V)=\Lambda_{-p}(V)\), even when \(V\) is not
reflection invariant.

\subsection{Minimal front speed and spreading speed}

In one dimension, a right-moving front is
\[
 u_j(t)=U(j-ct,j),\qquad U(\xi,j+N)=U(\xi,j),
\]
with \(U(-\infty,j)=1\) and \(U(+\infty,j)=0\).  The right spreading
speed for compactly supported nonzero data is the threshold
\(c^*\) characterized by decay for \(j\geq ct\) when
\(c>c^*\), and convergence to \(1\) on
\(0\leq j\leq ct\) when \(0<c<c^*\).  It equals the minimal front speed,
and periodic lattice KPP
theory gives the characterization \eqref{eq:kpp-speed}, with
\(\Lambda_p(V)\) defined by \eqref{eq:kpp-tilted}.

\begin{proof}[Proof of Corollary~\ref{cor:speeds}]
Theorem~\ref{thm:rearrangement} gives
\[
 \frac{\Lambda_p(V^\#)}p\geq\frac{\Lambda_p(V)}p
 \qquad(p>0).
\]
Taking the infimum over \(p\) and using \eqref{eq:kpp-speed} proves
\eqref{eq:speed-rearrangement}.  Every cyclic arrangement of the entries
of \(V\) has the same bell rearrangement, so this is optimal among all
permutations.
\end{proof}

\begin{remark}[Scope of the speed conclusion]
This is a KPP, or pulled-front, statement: the speed is determined by
the linear growth rates \(V_j=f_j'(0)\).  It need not extend to pushed
or bistable fronts.  It also differs from optimizing the numerical
values of \(V_j\) under a mass constraint, as in
\cite{LiangLinMatano2010,LiangMatano2014}.  No multidimensional
rearrangement claim is made; only the variational principle is
high-dimensional.
\end{remark}

\subsection{Open chains: Dirichlet and Neumann boundary conditions}

The role of the exponential tilt is different on an open chain.  Let
\(I_N=\{1,\ldots,N\}\), fix \(d>0\), and first impose Dirichlet boundary
conditions by
\[
 (\cL^D_{p,V}u)_j
 =d(e^{-p}u_{j+1}+e^pu_{j-1}-2u_j)+V_ju_j,
 \qquad u_0=u_{N+1}=0.
\]
For the reflecting, or discrete Neumann, realization, set
\[
 (\cL^N_{p,V}u)_j=
 \begin{cases}
 d(e^{-p}u_2-u_1)+V_1u_1,&j=1,\\
 d(e^{-p}u_{j+1}+e^pu_{j-1}-2u_j)+V_ju_j,
       &2\leq j\leq N-1,\\
 d(e^pu_{N-1}-u_N)+V_Nu_N,&j=N.
 \end{cases}
\]
Let \(S_p=\diag(e^{p},e^{2p},\ldots,e^{Np})\).  Since an open path has
no nontrivial cycle,
\[
 S_p^{-1}\cL^D_{p,V}S_p=\cL^D_{0,V},
 \qquad
 S_p^{-1}\cL^N_{p,V}S_p=\cL^N_{0,V}.
\]
Consequently, if \(\Lambda_p^D(V)\) and \(\Lambda_p^N(V)\) denote the
corresponding principal eigenvalues, then they are independent of the
tilt and have the Rayleigh representations
\[
\begin{aligned}
 \Lambda_p^D(V)
 &=\max_{a\in\cP_N}
 \left\{-d\sum_{j=0}^{N}(a_{j+1}-a_j)^2
             +\sum_{j=1}^{N}V_ja_j^2\right\},
 &a_0=a_{N+1}=0,\\
 \Lambda_p^N(V)
 &=\max_{a\in\cP_N}
 \left\{-d\sum_{j=1}^{N-1}(a_{j+1}-a_j)^2
             +\sum_{j=1}^{N}V_ja_j^2\right\}.
\end{aligned}
\]
Thus there is no nonlinear flux-correction problem in these open-boundary
realizations.  The tilt is an exact discrete gradient and can be removed by
diagonal conjugation; on the periodic cycle its total phase around the cycle
is generally nonzero, so no periodic diagonal conjugation removes it.

The two boundary conditions lead to different optimal arrangements.
For the Dirichlet problem, rotate the bell tour \(\rho_{N+1}\) from
\eqref{eq:bell} so that the rank \(N+1\) comes first, delete that rank,
and denote the remaining order by \(\rho_N^D\).  If
\(x_1\geq\cdots\geq x_N\), the Dirichlet bell rearrangement \(x^{D\#}\)
places
\(x_{\rho_N^D(1)},\ldots,x_{\rho_N^D(N)}\) along the path.  It clusters
the largest entries near the center and the smallest entries near the
two endpoints.  For the Neumann problem, let \(x^\downarrow\) denote
the monotone decreasing order along the path; its reflection is
equivalent.

\begin{proposition}[Open-chain rearrangements]\label{prop:open-chain}
For every \(V\in\R^N\) and \(p\in\R\),
\[
 \Lambda_p^D(V^{D\#})\geq\Lambda_p^D(V),
 \qquad
 \Lambda_p^N(V^\downarrow)\geq\Lambda_p^N(V).
\]
\end{proposition}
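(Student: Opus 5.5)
Since the tilt has been removed by the conjugation $S_p$, it suffices to treat $p=0$ and the Rayleigh formulas displayed just before the proposition. In each case we show that for every $a\in\cP_N$ the corresponding rearranged profile does at least as well. Concretely, we rearrange $a$ together with $V$ and bound the Rayleigh functional of $V$ at $a$ by that of the rearranged $V$ at the rearranged $a$. The potential term is handled by the finite Hardy--Littlewood inequality, Lemma~\ref{lem:HL}. Its proof only uses that $V$ and $a$ are placed in the same rank order at the same sites, so it applies verbatim to the Dirichlet and Neumann orders. Since $\sum_j a_j^2$ is preserved, the Dirichlet energy is expanded as
\[
 \sum_j(a_{j+1}-a_j)^2=\sum(\text{squares})-2\sum_j a_ja_{j+1}.
\]
It then suffices to show that the rearrangement does not decrease the nearest-neighbour product sum $\sum_j a_ja_{j+1}$, with suitable care at the boundary.

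\emph{Neumann case.} The squares contribute $a_1^2+a_N^2+2\sum_{j=2}^{N-1}a_j^2$, which is not invariant. It is cleaner to work with the path kernel directly. The goal is
\[
 \sum_{j=1}^{N-1}(a^\downarrow_{j+1}-a^\downarrow_j)^2\le\sum_{j=1}^{N-1}(a_{j+1}-a_j)^2 .
\]
For monotone order, the left-hand side telescopes in the sense that
\[
 \sum_{j=1}^{N-1}|a^\downarrow_{j+1}-a^\downarrow_j|=\max a-\min a .
\]
A direct argument is cleaner, though. Close the path into a cycle on $N$ vertices, or use Supnick's theorem on a Hamiltonian \emph{path}. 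For this we add a dummy city $0$ with cost $c_{0s}=0$ and use the symmetric Monge matrix $c_{rs}=(x_r-x_s)^2$, which is Monge because $\partial_{xy}(x-y)^2=-2<0$. A Monge matrix bordered by a constant row remains Monge if the dummy city is placed at an end of the ordering. Theorem~\ref{thm:supnick} then gives an optimal tour, and deleting the dummy yields the monotone path. Equivalently, there is an elementary argument: any path visits $\max a$ and $\min a$, and the sum of squared increments along a path between two values, with all intermediate values visited, is minimized by monotone traversal. This follows from the convexity inequality $(u-w)^2\ge(u-v)^2+(v-w)^2$ for $v$ between $u$ and $w$, applied by inserting the missing values.

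\emph{Dirichlet case.} Setting $a_0=a_{N+1}=0$, the Dirichlet path energy is exactly the cyclic energy on $N+1$ vertices of the vector $(a_1,\dots,a_N,0)$, where the zero entry is the minimum because $a>0$. By the cyclic Pólya--Szegő inequality, namely Corollary~\ref{cor:cyclic} with $K(x,y)=xy$ (the $\sigma=0$ remark), the bell order $\rho_{N+1}$ maximizes $\sum_j a_ja_{j+1}$ over all cyclic orders of these $N+1$ values. In that order the rank $N+1$ entry, which is $0$, sits at the appropriate position, and rotating it to the front recovers precisely $\rho^D_N$. Hence $(a^{D\#})$ with zero boundary has Dirichlet energy no larger than that of $a$, since the sum of squares $\sum a_j^2=1$ is unchanged. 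Combined with Hardy--Littlewood for the potential term, and then maximizing over $a$, this gives $\Lambda^D_p(V^{D\#})\ge\Lambda^D_p(V)$.

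\emph{Main obstacle.} The Neumann case is the main obstacle, because the path kernel is not of pure product type and the boundary weights break the cyclic symmetry. I would first try the dummy-city embedding into Supnick's theorem. If the bordered matrix fails to be Monge, I would fall back on the elementary value-insertion argument above. That argument shows directly that monotone order minimizes $\sum(a_{j+1}-a_j)^2$ among all Hamiltonian paths.
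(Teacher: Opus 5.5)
Your reduction to $p=0$, your use of Hardy--Littlewood with $V$ and $a$ placed in the same rank order, and your whole Dirichlet argument are the paper's proof. In the Dirichlet case you append $0$ as the rank-$(N+1)$ value, use $\sum_j a_j^2=1$ to reduce to the cyclic product sum on $N+1$ values, apply Corollary~\ref{cor:cyclic} with $K(x,y)=xy$, and rotate to get $\rho_N^D$.

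In the Neumann case, the dummy-city embedding into Supnick's theorem is false, not merely uncertain, and you should delete it rather than ``try it first''.
\begin{itemize}
\item Suppose the dummy is placed first with a constant border $c_{0s}\equiv c$. The Monge inequalities with $r=0$ reduce to $c_{tu}\le c_{ts}$ for all $s<u$, because the constant cancels. Taking $t=1$, $s=2$, $u=3$ then requires $(x_1-x_3)^2\le(x_1-x_2)^2$, which fails as soon as $x_2>x_3$.
\item Placing the dummy last fails in the same way.
\item Even if Supnick's theorem applied, deleting the dummy from the bell tour would leave the V-shaped path $x_1,x_3,x_5,\ldots,x_4,x_2$, not the monotone one. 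For the values $3,2,1$ that path costs $5$, while the monotone path costs $2$.
\end{itemize}

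Your fallback, by contrast, is a complete and correct proof. Written out, it goes as follows.
\begin{itemize}
\item Put $g_k=x_k-x_{k+1}\ge 0$. For an arbitrary arrangement $b$, let $B_j$ be the set of consecutive gaps spanned by the step $(b_j,b_{j+1})$.
\item Superadditivity of $t\mapsto t^2$ on $[0,\infty)$ gives $(b_{j+1}-b_j)^2\ge\sum_{k\in B_j}g_k^2$.
\item Every gap with $g_k>0$ lies in some $B_j$. No entry lies strictly between $x_{k+1}$ and $x_k$, and the path visits both $x_1$ and $x_N$, so some single step jumps across $[x_{k+1},x_k]$.
\item Summing gives $\sum_j(b_{j+1}-b_j)^2\ge\sum_k g_k^2$, which is the monotone energy.
\end{itemize}

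This is a genuinely different route from the paper's. The paper only asserts that monotone ordering ``removes every crossing'', an appeal to an uncrossing or exchange principle in the same Monge spirit as Supnick's theorem. That keeps both boundary conditions in one combinatorial language, but it leaves the Neumann step as a one-line claim. Your covering argument settles that step in one stroke, with no iteration or termination issue. It also works verbatim for any path cost $F(|u-w|)$ with $F\ge 0$ convex and $F(0)=0$.
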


\begin{proof}
For the Dirichlet energy, append the value zero to the entries of
\(a\).  The sum
\[
 \sum_{j=0}^{N}(a_{j+1}-a_j)^2,\qquad a_0=a_{N+1}=0,
\]
is the cyclic squared-distance cost through these \(N+1\) values.
Equivalently, since \(\sum_ja_j^2\) is fixed, minimizing this cost is
the same as maximizing the cyclic sum of neighboring products.
Supnick's fixed-tour theorem therefore gives
\[
 \sum_{j=0}^{N}(a^{D\#}_{j+1}-a^{D\#}_j)^2
 \leq\sum_{j=0}^{N}(a_{j+1}-a_j)^2.
\]
The finite Hardy--Littlewood inequality pairs \(V^{D\#}\) and
\((a^{D\#})^2\) in the same rank order, so their potential term is no
smaller.  The Dirichlet conclusion follows from the Rayleigh formula.

For the Neumann energy, ordering the entries of \(a\) monotonically
removes every crossing and minimizes
\(\sum_{j=1}^{N-1}(a_{j+1}-a_j)^2\).  Applying Hardy--Littlewood to
\(V^\downarrow\) and \((a^\downarrow)^2\), and then maximizing the
Neumann Rayleigh formula, proves the second inequality
\cite{HardyLittlewoodPolya1952,Supnick1957}.
\end{proof}

\begin{remark}[Reflection heuristic]
At an intuitive level, a reflecting Neumann chain may be unfolded across
its two endpoints into an even periodic chain.  In this picture, the
monotone optimal arrangement on the path is one half of a symmetric
clustered arrangement on the doubled cycle.  The exact treatment of the
endpoint sites and edge weights depends on the chosen discrete Neumann
convention, so we use this observation only as a geometric interpretation;
the proof above works directly with the path Rayleigh formula.
\end{remark}

\begin{remark}[Boundary convention and the continuous analogue]
The Neumann statement above uses the reflecting matrix realization,
which is compatible with the diagonal conjugation \(S_p\).  This convention
must be distinguished from inserting a first-order exponential twist
into a continuous differential expression while retaining the
unmodified condition \(\phi'=0\) at both endpoints.  Under exponential
conjugation, an ordinary Neumann condition becomes a
\(p\)-dependent Robin condition.  Keeping the unmodified Neumann
condition therefore defines a different, genuinely \(p\)-dependent
problem, and the preceding similarity and rearrangement conclusions do
not automatically apply.  Variational principles for asymmetric
operators with natural boundary conditions go back to
\citet{Holland1978}; continuous Dirichlet rearrangement problems for
general elliptic operators are treated, for example, by
\cite{HamelNadirashviliRuss2011}.  Discrete rearrangements on more
general graphs require additional geometric hypotheses
\cite{Steinerberger2024,HajaiejHanHua2026}.
\end{remark}

\appendix

\section{A divergence-free flux representation by convex duality}
\label{app:flux-duality}

The direct saddle-point proof of Theorem~\ref{thm:highdim} does not require
convex duality.  We nevertheless record here a complementary representation
of the inner corrector problem.  Orient every edge from \(x\) to
\(x+e_\nu\), and use the notation \(w_{x,\nu}\), \(\cC_a\), and
\(\divd J\) from Section~\ref{sec:variational}.

\begin{proposition}[Divergence-free flux representation]
\label{prop:dual-current}
For fixed \(a\in\cP_{\mathbf N}\),
\begin{equation*}
\begin{aligned}
 \min_{\beta\in\cZ_{\mathbf N}}\cC_a(\beta)
 =2\sup_{\divd J=0}\sum_{x,\nu}\Bigg[
 &J_{x,\nu}\gamma_{x,\nu}
 +\sqrt{J_{x,\nu}^2+w_{x,\nu}^2}\\
 &-J_{x,\nu}\arsinh
   \left(\frac{J_{x,\nu}}{w_{x,\nu}}\right)
 \Bigg].
\end{aligned}
\end{equation*}
At the optimum,
\[
 J_{x,\nu}
 =w_{x,\nu}\sinh\theta_{x,\nu}(\widehat\beta).
\]
\end{proposition}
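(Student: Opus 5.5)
The plan is to mirror the proof of Lemma~\ref{lem:scalar-flux}, now edge by edge, and to use Lemma~\ref{lem:cell} to show that the bound is attained.

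\emph{Step 1: edgewise Fenchel inequality.} For \(w>0\) and \(J,\theta\in\R\), the function \(\theta\mapsto w\cosh\theta\) is convex. Its Legendre conjugate is
\[
J\mapsto J\arsinh(J/w)-\sqrt{J^2+w^2},
\]
so
\[
w\cosh\theta\geq J\theta+\sqrt{J^2+w^2}-J\arsinh(J/w),
\]
with equality exactly when \(w\sinh\theta=J\). This is the inequality already used in Lemma~\ref{lem:scalar-flux}, namely \(w\cosh\theta\ge J\theta+g_J(w)\).

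\emph{Step 2: weak duality.} Fix any edge field \(J\) with \(\divd J=0\) and any \(\beta\in\cZ_{\mathbf N}\). Apply Step 1 on each edge with \(w=w_{x,\nu}\) and \(\theta=\theta_{x,\nu}(\beta)=\beta_{x+e_\nu}-\beta_x+\gamma_{x,\nu}\), then sum over \(x,\nu\). The terms involving \(\beta\) contribute
\[
\sum_{x,\nu}J_{x,\nu}(\beta_{x+e_\nu}-\beta_x)=-\sum_x\beta_x\,\divd J(x),
\]
obtained by summation by parts on the torus (reindexing \(x+e_\nu\mapsto x\)). This vanishes because \(\divd J=0\). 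Hence
\[
\cC_a(\beta)\geq 2\sum_{x,\nu}\bigl[J_{x,\nu}\gamma_{x,\nu}+g_{J_{x,\nu}}(w_{x,\nu})\bigr]
\]
for every admissible pair. Taking the minimum over \(\beta\) on the left and the supremum over \(J\) on the right gives \(\min\cC_a\ge 2\sup(\cdots)\).

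\emph{Step 3: attainment.} Let \(\widehat\beta\) be the unique minimizer from Lemma~\ref{lem:cell}, and set \(\widehat J_{x,\nu}=w_{x,\nu}\sinh\theta_{x,\nu}(\widehat\beta)\). By \eqref{eq:current}, \(\divd\widehat J=0\), so \(\widehat J\) is admissible. By construction it satisfies the equality condition of Step 1 on every edge, so the inequality of Step 2 holds with equality at \((\widehat\beta,\widehat J)\). Thus the supremum is attained and equals the minimum, and the optimal flux has the stated form.

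The only delicate point is the summation-by-parts sign and index bookkeeping in Step 2, that is, checking that the discrete divergence convention in \eqref{eq:current} matches the pairing exactly. I expect this to be the main (though minor) obstacle. Existence of a minimizer is already provided by Lemma~\ref{lem:cell}, so no abstract duality theorem is needed.
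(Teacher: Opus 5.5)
Your proof is correct, but it closes the duality gap differently from the paper. The paper writes each $w_e\cosh(D\beta_e+\gamma_e)$ as a supremum via the conjugate $f_e^*$ (Fenchel--Moreau). It then invokes finite-dimensional Fenchel--Rockafellar duality, with its constraint qualification, to interchange $\min_\beta$ and $\sup_J$. In that route the constraint $\divd J=0$ appears as the condition under which the linear term $-\sum_x\beta_x\divd J(x)$ stays bounded below on $\cZ_{\mathbf N}$, and the optimal flux is read off from the equality case.

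You instead impose $\divd J=0$ from the start and get weak duality directly from the edgewise Fenchel--Young inequality plus summation by parts; your sign convention matches \eqref{eq:current}. You then exhibit a zero-gap pair explicitly. By Lemma~\ref{lem:cell}, the Euler--Lagrange equation of the primal minimizer $\widehat\beta$ says exactly that $\widehat J_{x,\nu}=w_{x,\nu}\sinh\theta_{x,\nu}(\widehat\beta)$ is divergence-free, and this $\widehat J$ saturates every edgewise inequality. This is the same mechanism as Lemma~\ref{lem:scalar-flux} and as the explicit saddle in the proof of Theorem~\ref{thm:highdim}.

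Your route is more elementary: it needs no abstract duality theorem and shows at once that the supremum is a maximum. The paper's route explains why divergence-free fluxes are the natural dual variables, and it gives equality of values without first knowing that a primal minimizer exists.

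If you want the last clause in the strong sense that every optimal flux has the stated form, add one line. Any maximizer $J^*$ satisfies $\cC_a(\widehat\beta)=2\sum_{x,\nu}\bigl[J^*_{x,\nu}\gamma_{x,\nu}+g_{J^*_{x,\nu}}(w_{x,\nu})\bigr]$, which forces equality on every edge and hence $J^*=\widehat J$.
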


\begin{proof}
For the oriented edge \(e=(x,x+e_\nu)\), write
\[
 D\beta_e=\beta_{x+e_\nu}-\beta_x,\qquad
 w_e=w_{x,\nu},\qquad \gamma_e=\gamma_{x,\nu}.
\]
The convex conjugate of \(f_e(z)=w_e\cosh z\) is
\[
 f_e^*(J_e)
 =\sup_{z\in\R}\{J_ez-f_e(z)\}
 =J_e\arsinh\left(\frac{J_e}{w_e}\right)
  -\sqrt{J_e^2+w_e^2},
\]
because the supremum is attained at
\(z=\arsinh(J_e/w_e)\).  The Fenchel--Moreau identity therefore gives,
edge by edge,
\[
 f_e(D\beta_e+\gamma_e)
 =\sup_{J_e\in\R}
 \{J_e(D\beta_e+\gamma_e)-f_e^*(J_e)\}.
\]
After summing over the edges, finite-dimensional
Fenchel--Rockafellar duality \cite[Chap.~III]{EkelandTemam1999} permits
minimization in \(\beta\).  The discrete summation-by-parts identity
\[
 \sum_{x,\nu}J_{x,\nu}
 (\beta_{x+e_\nu}-\beta_x)
 =-\sum_x\beta_x\divd J(x)
\]
shows that this minimization imposes \(\divd J=0\).  Indeed,
\(\sum_x\divd J(x)=0\) on the torus, so orthogonality to
\(\cZ_{\mathbf N}\) forces the discrete divergence to vanish.  Since every
\(f_e\) is finite and continuous on \(\R\), the standard
finite-dimensional constraint qualification holds and there is no duality
gap.  Consequently,
\[
 \min_{\beta\in\cZ_{\mathbf N}}
 \sum_e f_e(D\beta_e+\gamma_e)
 =\sup_{\divd J=0}\sum_e
 \{J_e\gamma_e-f_e^*(J_e)\}.
\]
Substituting the displayed expression for \(f_e^*\) and multiplying by
\(2\) proves the formula.  Finally, equality in the edgewise Fenchel
inequality is equivalent to
\[
 J_e=f_e'(D\widehat\beta_e+\gamma_e),
\]
which is precisely the asserted flux relation.
\end{proof}

We now make explicit how Proposition~\ref{prop:dual-current} enters the
principal-eigenvalue formula.  For \(a\in\cP_{\mathbf N}\) and a
divergence-free edge field \(J\), define
\[
\begin{aligned}
 \mathcal G_{\cM}(a,J)
 :=\sum_xq_xa_x^2+2\sum_{x,\nu}\Bigg[
 &J_{x,\nu}\gamma_{x,\nu}
 +\sqrt{J_{x,\nu}^2+
        \kappa_{x,\nu}^2a_x^2a_{x+e_\nu}^2}\\
 &-J_{x,\nu}\arsinh\left(
   \frac{J_{x,\nu}}
        {\kappa_{x,\nu}a_xa_{x+e_\nu}}\right)
 \Bigg].
\end{aligned}
\]

\begin{corollary}[Principal-eigenvalue flux formula]
\label{cor:principal-flux}
The principal eigenvalue satisfies
\begin{equation}\label{eq:principal-flux}
 \Lambda(\cM)
 =\max_{a\in\cP_{\mathbf N}}
   \sup_{\divd J=0}\mathcal G_{\cM}(a,J).
\end{equation}
Let \(\phi,\psi\) be normalized as in Theorem~\ref{thm:highdim}.  An
optimizing pair in \eqref{eq:principal-flux} is
\[
 a_x^*=\sqrt{\phi_x\psi_x}
\]
together with
\[
\begin{aligned}
 J_{x,\nu}^*
 &=
 \kappa_{x,\nu}a_x^*a_{x+e_\nu}^*
 \sinh\!\bigl(
   \beta_{x+e_\nu}^*-\beta_x^*+\gamma_{x,\nu}\bigr)\\
 &=\frac12\Bigl[
 r_\nu^+(x)\psi_x\phi_{x+e_\nu}
 -r_\nu^-(x+e_\nu)\phi_x\psi_{x+e_\nu}
 \Bigr],
\end{aligned}
\]
where
\(\beta_x^*=\tfrac12\log(\phi_x/\psi_x)\).
\end{corollary}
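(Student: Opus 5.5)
The plan is to combine Theorem~\ref{thm:highdim} with Proposition~\ref{prop:dual-current}. We substitute the inner minimum over correctors by its dual supremum over divergence-free fluxes, and then read off the optimizer from the explicit saddle point.

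\textbf{Step 1: rewrite the max--min formula.} By \eqref{eq:highdim},
\[
\Lambda(\cM)=\max_{a\in\cP_{\mathbf N}}\min_{\beta\in\cZ_{\mathbf N}}\cF_{\cM}(a,\beta),
\qquad
\cF_{\cM}(a,\beta)=\sum_xq_xa_x^2+\cC_a(\beta).
\]
The first term does not depend on $\beta$. Hence
\[
\min_\beta\cF_{\cM}(a,\beta)=\sum_xq_xa_x^2+\min_\beta\cC_a(\beta).
\]

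\textbf{Step 2: apply the duality.} Proposition~\ref{prop:dual-current} with $w_{x,\nu}=\kappa_{x,\nu}a_xa_{x+e_\nu}$ gives
\[
\min_\beta\cC_a(\beta)=\sup_{\divd J=0}\bigl(\mathcal G_{\cM}(a,J)-\textstyle\sum_xq_xa_x^2\bigr).
\]
Adding back the $q$-term yields $\min_\beta\cF_{\cM}(a,\beta)=\sup_{\divd J=0}\mathcal G_{\cM}(a,J)$ for every $a$. Therefore the outer maximum in \eqref{eq:principal-flux} is attained, because it is attained in \eqref{eq:highdim}. In fact, the saddle point in the proof of Theorem~\ref{thm:highdim} shows that the maximum is attained at $a^*$.

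\textbf{Step 3: identify the optimizer.} At $a=a^*$, the proof of Theorem~\ref{thm:highdim} shows that $\beta^*$ is the unique inner minimizer. By the equality clause of Proposition~\ref{prop:dual-current}, or directly from the edgewise Fenchel equality $w\cosh\theta=J\theta+g_J(w)$ when $w\sinh\theta=J$, the field
\[
J^*_{x,\nu}=w_{x,\nu}\sinh\theta_{x,\nu}(\beta^*)
\]
attains the supremum. It is divergence-free by Lemma~\ref{lem:cell}. So the supremum is a maximum, and $\mathcal G_{\cM}(a^*,J^*)=\Lambda(\cM)$.

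\textbf{Step 4: the explicit eigenvector formula.} Expand
\[
2\kappa\, a_xa_{x+e_\nu}\sinh(\Delta\beta+\gamma)
=\kappa\, a_xa_{x+e_\nu}\bigl(e^{\Delta\beta+\gamma}-e^{-\Delta\beta-\gamma}\bigr),
\]
where $\Delta\beta=\beta^*_{x+e_\nu}-\beta^*_x$. The ingredients are:
\[
\kappa e^{\gamma}=r_\nu^+(x),\qquad \kappa e^{-\gamma}=r_\nu^-(x+e_\nu),
\]
\[
a_xe^{-\beta_x}=\psi_x,\qquad a_xe^{\beta_x}=\phi_x.
\]
With these, the first term becomes $r_\nu^+(x)\psi_x\phi_{x+e_\nu}$ and the second becomes $r_\nu^-(x+e_\nu)\phi_x\psi_{x+e_\nu}$, which gives the displayed expression.

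The main obstacle is justifying that the outer operation may be written as $\max$ and that the inner supremum is attained. Both are handled by the explicit saddle point rather than by abstract compactness, since $\cP_{\mathbf N}$ is not closed.
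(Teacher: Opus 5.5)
Your proposal is correct and follows the paper's proof essentially step for step. You split off the $q$-term in the max--min line of \eqref{eq:highdim}, substitute Proposition~\ref{prop:dual-current} for the inner corrector minimum, and take $a^*$, $\beta^*$ from the Perron saddle point. You then obtain $J^*$ from the edgewise Fenchel equality and expand the hyperbolic sine using $a^*_xe^{\beta^*_x}=\phi_x$ and $a^*_xe^{-\beta^*_x}=\psi_x$. The only cosmetic difference is that you get $\divd J^*=0$ from Lemma~\ref{lem:cell}, since $\beta^*$ is the inner minimizer, whereas the paper reads it off directly from the left and right eigenvalue equations; both are valid.
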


\begin{proof}
The third line of \eqref{eq:highdim} reads
\[
 \Lambda(\cM)
 =\max_{a\in\cP_{\mathbf N}}
 \left\{\sum_xq_xa_x^2+
       \min_{\beta\in\cZ_{\mathbf N}}\cC_a(\beta)\right\}.
\]
Substituting Proposition~\ref{prop:dual-current} gives
\eqref{eq:principal-flux}.  Theorem~\ref{thm:highdim} identifies the
maximizing profile \(a^*\) and its minimizing corrector \(\beta^*\);
the equality condition in Proposition~\ref{prop:dual-current} then gives
the first expression for \(J^*\).  Expanding the hyperbolic sine and using
\[
 a_x^*e^{\beta_x^*}=\phi_x,\qquad
 a_x^*e^{-\beta_x^*}=\psi_x
\]
gives the second expression.  The left and right eigenvalue equations
equivalently imply \(\divd J^*=0\).
\end{proof}

Thus the convex-duality argument is the dualization of the inner corrector
minimum in the maximum--minimum member of \eqref{eq:highdim}.  By itself it
does not justify reversing the two extremal operations.  That reversal is
proved in Section~\ref{sec:variational} by the explicit Perron saddle point.
At that saddle, however, the corrector \(\beta^*\) and the flux \(J^*\)
saturate every edgewise Fenchel inequality, so the direct and dual
descriptions give the same principal eigenvalue and the same optimizer.

    \section*{Declaration on the Use of AI}
The human author initiated and led the research program, formulated the mathematical problem, developed the main results, and independently verified all arguments. Regarding the variational formula, the human author independently completed the proof presented in Section 2 and subsequently discussed it with ChatGPT. ChatGPT suggested that the formula might also be proved using convex duality. Building on this suggestion, we developed a convex-duality-based proof, which is included in the appendix.

For the rearrangement part, the first draft established the result by direct calculation. ChatGPT provided a list of references related to rearrangement inequalities, summarized and compared the relevant results, and identified several works connected with the present problem, including Supnick’s fixed-tour theorem, of which we had previously been unaware. After further investigation, we found that Supnick’s theorem substantially simplified our original argument, and the final manuscript adopts this simplified proof.

ChatGPT also assisted with mathematical exposition, notation, grammar, and presentation. The final manuscript was carefully revised and approved by the human author, who takes full responsibility for its content.

\bibliographystyle{abbrvnat}
\bibliography{discrete_nadin_periodic_lattices}

@article{BerestyckiHamelNadirashvili2005,
  author  = {Berestycki, Henri and Hamel, Fran{\c c}ois and Nadirashvili, Nikolai},
  title   = {The speed of propagation for {KPP} type problems. {I}. {P}eriodic framework},
  journal = {J. Eur. Math. Soc. (JEMS)},
  volume  = {7},
  number  = {2},
  pages   = {173--213},
  year    = {2005},
  doi     = {10.4171/JEMS/26}
}

@article{BurkardEtAl1998,
  author  = {Burkard, Rainer E. and Deineko, Vladimir G. and van Dal, Ren{\'e}
             and van der Veen, Jack A. A. and Woeginger, Gerhard J.},
  title   = {Well-solvable special cases of the traveling salesman problem:
             a survey},
  journal = {SIAM Rev.},
  volume  = {40},
  number  = {3},
  pages   = {496--546},
  year    = {1998},
  doi     = {10.1137/S0036144596297514}
}

@misc{CasselMannorTennenholtz2020,
  author        = {Cassel, Asaf and Mannor, Shie and Tennenholtz, Guy},
  title         = {The pendulum arrangement: maximizing the escape time of
                   heterogeneous random walks},
  year          = {2020},
  eprint        = {2007.13232},
  archivePrefix = {arXiv},
  primaryClass  = {math.PR},
  note          = {arXiv:2007.13232}
}

@article{CelaDeinekoWoeginger2017,
  author  = {{\c C}ela, Eranda and Deineko, Vladimir G. and Woeginger, Gerhard J.},
  title   = {The multi-stripe travelling salesman problem},
  journal = {Ann. Oper. Res.},
  volume  = {259},
  number  = {1--2},
  pages   = {21--34},
  year    = {2017},
  doi     = {10.1007/s10479-017-2513-4}
}

@article{Cheng2015,
  author  = {Cheng, Cui-Ping},
  title   = {Travelling wave solutions in periodic monostable equations
             on a two-dimensional spatial lattice},
  journal = {IMA J. Appl. Math.},
  volume  = {80},
  number  = {4},
  pages   = {1254--1272},
  year    = {2015},
  doi     = {10.1093/imamat/hxu038}
}

@article{DonskerVaradhan1975,
  author  = {Donsker, Monroe D. and Varadhan, Srinivasa R. S.},
  title   = {On a variational formula for the principal eigenvalue for
             operators with maximum principle},
  journal = {Proc. Natl. Acad. Sci. USA},
  volume  = {72},
  number  = {3},
  pages   = {780--783},
  year    = {1975},
  doi     = {10.1073/pnas.72.3.780}
}

@article{DonskerVaradhan1976,
  author  = {Donsker, Monroe D. and Varadhan, Srinivasa R. S.},
  title   = {On the principal eigenvalue of second-order elliptic differential operators},
  journal = {Comm. Pure Appl. Math.},
  volume  = {29},
  number  = {6},
  pages   = {595--621},
  year    = {1976},
  doi     = {10.1002/cpa.3160290606}
}

@book{EkelandTemam1999,
  author    = {Ekeland, Ivar and T{\'e}mam, Roger},
  title     = {Convex Analysis and Variational Problems},
  series    = {Classics in Applied Mathematics},
  volume    = {28},
  publisher = {Society for Industrial and Applied Mathematics},
  address   = {Philadelphia, PA},
  year      = {1999},
  doi       = {10.1137/1.9781611971088}
}

@article{Fisher1937,
  author  = {Fisher, Ronald A.},
  title   = {The wave of advance of advantageous genes},
  journal = {Ann. Eugenics},
  volume  = {7},
  number  = {4},
  pages   = {355--369},
  year    = {1937},
  doi     = {10.1111/j.1469-1809.1937.tb02153.x}
}

@article{Friedland1981,
  author  = {Friedland, Shmuel},
  title   = {Convex spectral functions},
  journal = {Linear Multilinear Algebra},
  volume  = {9},
  number  = {4},
  pages   = {299--316},
  year    = {1981},
  doi     = {10.1080/03081088108817381}
}

@article{GuoHamel2006,
  author  = {Guo, Jong-Shenq and Hamel, Fran{\c c}ois},
  title   = {Front propagation for discrete periodic monostable equations},
  journal = {Math. Ann.},
  volume  = {335},
  number  = {3},
  pages   = {489--525},
  year    = {2006},
  doi     = {10.1007/s00208-005-0729-0}
}

@article{HatanoNelson1996,
  author  = {Hatano, Naomichi and Nelson, David R.},
  title   = {Localization transitions in non-{H}ermitian quantum mechanics},
  journal = {Phys. Rev. Lett.},
  volume  = {77},
  number  = {3},
  pages   = {570--573},
  year    = {1996},
  doi     = {10.1103/PhysRevLett.77.570}
}

@article{HatanoNelson1997,
  author  = {Hatano, Naomichi and Nelson, David R.},
  title   = {Vortex pinning and non-{H}ermitian quantum mechanics},
  journal = {Phys. Rev. B},
  volume  = {56},
  number  = {14},
  pages   = {8651--8673},
  year    = {1997},
  doi     = {10.1103/PhysRevB.56.8651}
}

@article{HatanoNelson1998,
  author  = {Hatano, Naomichi and Nelson, David R.},
  title   = {Non-{H}ermitian delocalization and eigenfunctions},
  journal = {Phys. Rev. B},
  volume  = {58},
  number  = {13},
  pages   = {8384--8390},
  year    = {1998},
  doi     = {10.1103/PhysRevB.58.8384}
}

@article{HajaiejHanHua2026,
  author  = {Hajaiej, Hichem and Han, Fengwen and Hua, Bobo},
  title   = {Discrete {S}chwarz rearrangement on lattice graphs},
  journal = {J. Lond. Math. Soc. (2)},
  volume  = {113},
  number  = {6},
  pages   = {e70583},
  year    = {2026},
  doi     = {10.1112/jlms.70583}
}

@article{HamelNadirashviliRuss2011,
  author  = {Hamel, Fran{\c c}ois and Nadirashvili, Nikolai and Russ, Emmanuel},
  title   = {Rearrangement inequalities and applications to isoperimetric
             problems for eigenvalues},
  journal = {Ann. of Math. (2)},
  volume  = {174},
  number  = {2},
  pages   = {647--755},
  year    = {2011},
  doi     = {10.4007/annals.2011.174.2.1}
}

@article{Holland1978,
  author  = {Holland, Charles J.},
  title   = {A minimum principle for the principal eigenvalue for second-order linear elliptic equations with natural boundary conditions},
  journal = {Comm. Pure Appl. Math.},
  volume  = {31},
  number  = {4},
  pages   = {509--519},
  year    = {1978},
  doi     = {10.1002/cpa.3160310406}
}

@article{HolmesHolroydRamirez2023,
  author  = {Holmes, Mark and Holroyd, Alexander E. and Ram{\'i}rez, Alejandro},
  title   = {Cyclic products and optimal traps in cyclic birth and death chains},
  journal = {Electron. J. Combin.},
  volume  = {30},
  number  = {2},
  pages   = {Paper No. P2.52},
  year    = {2023},
  doi     = {10.37236/11494}
}

@book{HardyLittlewoodPolya1952,
  author    = {Hardy, Godfrey H. and Littlewood, John E. and P{\'o}lya, George},
  title     = {Inequalities},
  edition   = {2},
  publisher = {Cambridge University Press},
  address   = {Cambridge},
  year      = {1952}
}

@article{KPP1937,
  author  = {Kolmogorov, Andrey N. and Petrovskii, Ivan G. and Piskunov, Nikolai S.},
  title   = {A study of the diffusion equation with increase in the amount of substance, and its application to a biological problem},
  journal = {Bull. Moscow Univ. Math. Mech.},
  volume  = {1},
  number  = {6},
  pages   = {1--25},
  year    = {1937},
  note    = {Russian original}
}

@article{LiangLinMatano2010,
  author  = {Liang, Xing and Lin, Xiaotao and Matano, Hiroshi},
  title   = {A variational problem associated with the minimal speed of travelling waves for spatially periodic reaction-diffusion equations},
  journal = {Trans. Amer. Math. Soc.},
  volume  = {362},
  number  = {11},
  pages   = {5605--5633},
  year    = {2010},
  doi     = {10.1090/S0002-9947-2010-04931-1}
}

@article{LiangMatano2014,
  author  = {Liang, Xing and Matano, Hiroshi},
  title   = {Maximizing the spreading speed of {KPP} fronts in two-dimensional stratified media},
  journal = {Proc. Lond. Math. Soc. (3)},
  volume  = {109},
  number  = {5},
  pages   = {1137--1174},
  year    = {2014},
  doi     = {10.1112/plms/pdu031}
}

@article{LiangZhao2010,
  author  = {Liang, Xing and Zhao, Xiao-Qiang},
  title   = {Spreading speeds and traveling waves for abstract monostable evolution systems},
  journal = {J. Funct. Anal.},
  volume  = {259},
  number  = {4},
  pages   = {857--903},
  year    = {2010},
  doi     = {10.1016/j.jfa.2010.04.018}
}

@article{LiangZhou2020,
  author  = {Liang, Xing and Zhou, Tao},
  title   = {Spreading speeds of {KPP}-type lattice systems in heterogeneous media},
  journal = {Commun. Contemp. Math.},
  volume  = {22},
  number  = {1},
  pages   = {1850083},
  year    = {2020},
  doi     = {10.1142/S0219199718500839}
}

@article{Nadin2010,
  author  = {Nadin, Gr{\'e}goire},
  title   = {The effect of the {S}chwarz rearrangement on the periodic principal eigenvalue of a nonsymmetric operator},
  journal = {SIAM J. Math. Anal.},
  volume  = {41},
  number  = {6},
  pages   = {2388--2406},
  year    = {2010},
  doi     = {10.1137/080743597}
}

@article{Steinerberger2024,
  author  = {Steinerberger, Stefan},
  title   = {Discrete rearrangements and the {P}{\'o}lya--{S}zeg{\H o} inequality on graphs},
  journal = {Studia Math.},
  volume  = {274},
  number  = {3},
  pages   = {269--286},
  year    = {2024},
  doi     = {10.4064/sm230526-15-10}
}

@article{Supnick1957,
  author  = {Supnick, Fred},
  title   = {Extreme {H}amiltonian lines},
  journal = {Ann. of Math. (2)},
  volume  = {66},
  number  = {1},
  pages   = {179--201},
  year    = {1957},
  doi     = {10.2307/1970124}
}

@article{Weinberger2002,
  author  = {Weinberger, Hans F.},
  title   = {On spreading speeds and traveling waves for growth and migration models in a periodic habitat},
  journal = {J. Math. Biol.},
  volume  = {45},
  number  = {6},
  pages   = {511--548},
  year    = {2002},
  doi     = {10.1007/s00285-002-0169-3}
}

\end{document}